\numberwithin{equation}{section}
\newtheorem{thm}{Theorem}[section]
\newtheorem{prop}[thm]{Proposition}
\newtheorem{lem}[thm]{Lemma}
\theoremstyle{definition}
\newtheorem{remark}[thm]{Remark}
\newcommand{\sW}{{\mathcal W}}
\def\ga{\Gamma}
\def\l{\lambda}
\def\t{\tau}
\def\va{\varphi}
\def\tht{\Theta}
\def\z{\zeta}
\def\ts{\times}
\def\iy{\infty}
\def\im{{\rm Im\, }}
\def\kr{{\rm Ker\, }}
\def\rank{{\rm rank\, }}
\def\wt{\widetilde}
\def\BC{{\mathbb C}}
\def\BD{{\mathbb D}}
\def\BT{{\mathbb T}}
\newcommand{\mat}[2]{\ensuremath{\left[\begin{array}{#1}#2\end{array} \right]}}
\newcommand{\sbm}[1]{\left[\begin{smallmatrix} #1\end{smallmatrix}\right]}
\newcommand{\ands}{\quad\mbox{and}\quad}
\newcommand{\wtil}{\widetilde}
\newcommand{\half}{\frac{1}{2}}
\newcommand{\inn}[2]{\ensuremath{\left\langle #1,#2 \right\rangle}}
\newcommand{\nn}{\notag}
\begin{document}

\title[The Bezout-corona  problem revisited: Wiener space setting]{The Bezout-corona problem revisited:\\   Wiener space setting}

\author[G.J. Groenewald]{G.J. Groenewald}
\address{%
G.J. Groenewald, Department of Mathematics, Unit for BMI, North-West University\\
Private Bag X6001-209, Potchefstroom 2520, South Africa}
\email{Gilbert.Groenewald@nwu.ac.za}


\author[S. ter Horst]{S. ter Horst}

\address{%
S. ter Horst, Department of Mathematics, Unit for BMI, North-West University\\
Private Bag X6001-209, Potchefstroom 2520, South Africa}

\email{sanne.terhorst@nwu.ac.za}


\author[M.A. Kaashoek]{M.A. Kaashoek}

\address{%
M.A. Kaashoek, Department of Mathematics,
VU University Amsterdam\\
De Boelelaan 1081a, 1081 HV Amsterdam, The Netherlands}

\email{m.a.kaashoek@vu.nl}

\thanks{The third author gratefully thanks the mathematics department of North-West University, Potchefstroom campus, South Africa, for the generous support  during his visit May~22 -- June~12, 2014.}


\begin{abstract} The matrix-valued  {Bezout-corona} problem $G(z)X(z)=I_m$, $|z|<1$, is studied in a Wiener space setting, that is, the given function $G$ is an analytic matrix function on the unit  {disc} whose Taylor coefficients are absolutely summable and the same is required for the solutions $X$. It turns out that all Wiener solutions  can be described explicitly in terms of two matrices and a  square  {analytic} Wiener function $Y$ satisfying $\det Y(z)\not =0$ for all $|z|\leq 1$.  It is also shown that some  of the results   hold  in the $H^\iy$  {setting, but} not all.  In fact, if  $G$ is an $H^\iy$ function, then $Y$ is just an $H^2$  {function}. Nevertheless, in this case, using the two matrices and the function   $Y$,  all $H^2$ solutions to the  {Bezout-corona} problem can be described  explicitly in a form analogous to the one appearing in the Wiener setting.
\end{abstract}

\subjclass[2010]{Primary 47A57; Secondary  47A53, 47B35, 46E40, 46E15}

\keywords{Corona problem, Bezout equation, Wiener space, matrix-valued functions, Tolokonnikkov's lemma}

\maketitle


\section{Introduction and main results}\label{intro}\setcounter{equation}{0}

Let  $G\in H_{m\ts p}^\iy$, that is, $G$ is an $m\ts p$ matrix function whose entries are $H^\iy$ functions on the open unit disc $\BD$.  The  \emph{$H^\iy$-corona problem} asks for a function $X\in H_{p\ts m}^\iy$  such that
\begin{equation}\label{corona}
G(z)X(z)=I_m\quad (z\in\BD).
\end{equation}
This problem has its roots in the paper \cite{Carl62} for the  case $m=1$,  and in \cite{Fuhr68} for the case $m>1$. Since then it has been studied in various contexts for which we refer to the books  {\cite{Helton87,Nikol86,Nikol02,Peller03}} and the recent papers  {\cite{FKR2a,FKR2b,Treil04,TW05,TrentZh06}}. See also the introduction of  \cite{FKR1} for the role of  equation \eqref{corona} in mathematical systems and control theory problems.  The problem is also closely related to the Leech problem  \cite{Leech} (see  also the comments in \cite{KaashRov14}) where the identity matrix $I_m$ in the right hand side of \eqref{corona} is replaced by another $H^\iy$ matrix function of appropriate size. In the Leech problem as well as in the corona problem norm constraints on the solution $X$ are often the main issue.  { When norm constraints are not the main issue one often refers to \eqref{corona} as a \emph{Bezout  problem} in a $H^\iy$ setting}.

We view the present  paper as an addition to the papers \cite{FKR2a} and \cite{FKR2b} which deal with the  {Bezout-corona} problem in the setting of stable rational matrix functions. Here we consider  equation \eqref{corona} in a Wiener space setting. We assume that  $G$ belongs  to  the Wiener space $\sW_+^{m\ts p}$ and we look for solutions $X$ which belong  to  the Wiener space $\sW_+^{p\ts m}$. In other words,    $G \in H_{m\ts p}^\iy$  and $X\in H_{p\ts m}^\iy$  and both have the additional property that their  Taylor coefficients at zero are absolutely summable. In this case we refer to \eqref{corona} as the \emph{ {Wiener-Bezout problem}}.  We shall be interested in the description of all Wiener solutions and the least square Wiener solution. The  {Wiener-Bezout} problem includes problem \eqref{corona}  for the case when $G$ is a stable rational matrix function and the solution $X$ is required to be stable rational matrix function too; see \cite{FKR2a} and \cite{FKR2b}.  For more information on Wiener spaces we refer the reader to  the final paragraph  of this introduction.

 Assuming $G\in H_{m\ts p}^\iy$, we shall also be interested in solutions $X$ to \eqref{corona}  that belong to $H_{p\ts m}^2$, where $H_{p\ts m}^2$ stands  for the linear spaces consisting of  all $p \ts m$ matrices   with entries in   $H^2$.  In that case we refer to \eqref{corona} as the  {$H^2$-Bezout} problem.

Recall, cf.,   \cite[Theorem 3.61]{Peller03} or \cite[Section 2]{FtHK-IEOT}, that the $H^\infty$-corona problem is solvable if and only if $T_G$ admits a right inverse. Here $T_G$ is the analytic Toeplitz operator
\[
T_G=\begin{bmatrix}
G_0&0&0&\cdots\\
 G_1&G_0&0&\cdots\\
  G_2&G_1&G_0&\cdots\\
  \vdots&\vdots&\vdots
\end{bmatrix}: \ell_+^2(\BC^p) \to  \ell_+^2(\BC^m),
\]
where $G_0, G_1, G_2, \ldots$ are the Taylor coefficients of $G$ at zero. Note that $T_G$ has  a right inverse if and only if  $T_GT_G^*$ is strictly positive. {Since $\sW_+^{p\ts m}\subset H^\iy_{p\ts m},$} for the   {Wiener-Bezout} problem to be solvable $T_GT_G^*$ has to be strictly positive.  We shall see that this condition is also sufficient and allows one to give a  description of all solutions to the  {Wiener-Bezout} problem in a simpler and more concrete form than for the general $H^\iy$-corona problem.

For our first main result  we need to introduce two matrices $\Xi_0 $ and $\tht_0 $, and a  $p\ts p$  matrix function $Y$ analytic on $\BD$ as follows.  Let $G\in H_{m\ts p}^\iy$, and assume that  $T_GT_G^*$ is strictly positive.   Then:
\begin{itemize}
\item [(M1)] $\Xi_0 $ is the $p\ts m$ matrix defined by $\Xi_0    = E_p^*T_G^*(T_GT_G^*)^{-1}E_m$;
\item [(M2)] $\tht_0 $ is the   $p\ts k$ matrix defined by
\begin{equation}\label{deftht0}
\Theta_0\Theta_0^* = I_p - E_p^*T_G^*(T_GT_G^*)^{-1}T_GE_p,\quad
\kr \Theta_0 = \{ 0 \}.
\end{equation}
\end{itemize}
Here for  any positive integer $n$ we write $E_n $ for the canonical embedding of $\BC^n$ onto the first coordinate space of $\ell_+^2(\BC^n)$, that is,
\begin{equation}
\label{defEn}
E_n  = \begin{bmatrix}
           I_n & 0 & 0 & 0 & \cdots \,\,\\
         \end{bmatrix}{}^\top:\mathbb{C}^n \rightarrow \ell_+^2(\mathbb{C}^n).
\end{equation}
Since $\kr \Theta_0 = \{ 0 \}$,   the integer $k$ in item (b) is equal to the rank of  the matrix $I_p - E_p^*T_G^*(T_GT_G^*)^{-1}T_GE_p$.  We shall see (Lemma \ref{L:MatRes} in the next section) that this rank is equal to $p-m$, even in  {the} $H^\iy$  {setting.} Finally, we define $Y$ to be the analytic $p\ts p$ matrix function on $\BD$ given by
\begin{align}\label{defY}
Y(z)=I_p-zE_p^*(I-zS_p^*)^{-1}T_G^*(T_GT_G^*)^{-1}H_GE_p\quad (z\in\BD).
\end{align}
Here for any positive integer $n$ the operator $S_n$ is the block forward shift on $\BC^n$. Furthermore,  $H_G$ is the Hankel operator defined by $G$, that is,
\[
H_G=\begin{bmatrix}
G_1&G_2&G_3&\cdots\\
 G_2&G_3&G_4&\cdots\\
  G_3&G_4&G_5&\cdots\\
  \vdots&\vdots&\vdots
\end{bmatrix}: \ell_+^2(\BC^p) \to  \ell_+^2(\BC^m).
\]
In other {words,} the Taylor coefficients of  $Y_0, Y_1, Y_2, \cdots $ of $Y$ at zero are given by
\begin{equation}
\label{defYj}
Y_0=I_p \ands \begin{bmatrix}Y_1\\ Y_2\\ \vdots\end{bmatrix}=-T_G^* (T_G T_G^*)^{-1} \begin{bmatrix}G_1\\ G_2\\ \vdots\end{bmatrix}.
\end{equation}
Note that the operator $T_G^*(T_GT_G^*)^{-1}$ appearing in the definitions of the matrices $\Xi$  and  {$\tht_0$} and the function $Y$ is the Moore-Penrose right inverse of $T_G$. In a less explicit form the function Y already appears in the  papers \cite{FKR2a, FKR2b}. The central role of this function  is a new aspect of the present paper.

Finally, with  the function $Y$ and the two matrices $\Xi$  and  {$\tht_0$} we associate
the  following two functions
\begin{equation}
\label{defXiTheta}
\Xi(z) =Y(z) \Xi_0 \ands \tht(z) =Y(z) \tht_0 \quad (z\in \BD).
\end{equation}

The next   theorem  is our  main result  in the Wiener space setting. It shows that with these three entities $\Xi_0$, $\tht_0$ and $Y$ all solutions to the  {Wiener-Bezout problem} can be described explicitly, and that the function $\Xi$ defined by  the first identity in  \eqref{defXiTheta} is the least squares solution.

\begin{thm}\label{thmmain1}
Let $G\in \sW_+^{m\ts p}$, and assume that  $T_GT_G^*$ is strictly positive. Then the matrix function $Y$ defined by \eqref{defY}  belongs to the Wiener space $\sW_+^{p\ts p}$, $\det Y(z)\not =0$ for each $|z|\leq 1$, and
\begin{align}\label{invY}
Y(z)^{-1}=I_p+zE_p^*T_G^*(T_GT_G^*)^{-1}H_G(I-z S_p)^{-1}E_p\quad (  {z\in\BD}).
\end{align}
In particular,  $Y^{-1}$ is a  Wiener function, and hence $Y$  is invertible outer.  Furthermore,
\begin{itemize}
\item[\textup{(i)}] $G(z)Y(z)=G_0$ for each $|z|\leq 1$,
\item[\textup{(ii)}]
the function $\tht$  defined by the second identity in \eqref{defXiTheta} belongs to $\sW_+^{p\ts (p-m)}$ \textup{(}in particular, $k=p-m$\textup{)}, and  $\tht$ is an  inner function  with $\im T_\tht=\kr T_G$,
\item[\textup{(iii)}]  the function  $H(z):=(\tht_0^*\tht_0)^{-1}\tht_0^*(I_p-\Xi_0G_0)  Y(z) ^{-1}$  belongs to the Wiener space $\sW_+^{(p-m)\ts p}$, and
\begin{equation}
\label{invGH1}
\det \begin{bmatrix} G(z)\\[.2cm] H(z) \end{bmatrix}\not =0 \ \mbox{and}\
\begin{bmatrix}
G(z)\\[.2cm]  H(z) \end{bmatrix}^{-1}=Y(z)\begin{bmatrix}\Xi_0 &\tht_0  \end{bmatrix} \quad (|z|\leq 1).
\end{equation}
\end{itemize}
Furthermore, for any $V\in \sW_+^{(p-m)\ts m}$ the function
\begin{equation}\label{allsolW}
X(z)=Y(z)\Xi_0+Y(z)\tht_0V(z)\quad (|z|\leq 1)
\end{equation}
is a solution to the  {Wiener-Bezout} problem associated with $G$, and all  {solutions are obtained} in this way. Moreover, with $X$ given by \eqref{allsolW} we have
\begin{equation}
\label{H2idW}
\|X(\cdot)u\|^2_{H^2_p}=\|Y(\cdot)\Xi_0 u\|^2_{H^2_p}+\|V(\cdot)u\|^2_{H^2_{p-m}}
\quad (u\in\BC^m).
\end{equation}
In particular, the function $\Xi(z)=Y(z)\Xi_0$ is the least squares solution to the  {Wiener-Bezout} problem associated with $G$.
\end{thm}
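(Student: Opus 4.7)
The proof plan is to establish the stated identities and containments by operator-theoretic computations with the Toeplitz and Hankel operators $T_G$, $H_G$ and the block shifts $S_p$, $S_m$. The main algebraic tools are the representation $G(z) = E_m^*(I - zS_m^*)^{-1} T_G E_p$, the intertwining relations $T_G S_p = S_m T_G$ and $H_G S_p = S_m^* H_G$, the projection identity $E_p E_p^* = I - S_p S_p^*$, and the trivial $E_m^* S_m = 0$. With these, most of the non-summability claims reduce to finite-dimensional matrix manipulations.

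I would start by verifying the purely algebraic identities on $\BD$. Expanding $G(z)Y(z)$ and using $T_G(I - S_p S_p^*) T_G^* = T_G T_G^* - S_m T_G T_G^* S_m^*$ together with $E_m^* S_m = 0$, the right-hand side telescopes to $G(z)Y(z) = G_0$. Specializing gives $G_0 \Xi_0 = I_m$, and then $G_0 \theta_0 \theta_0^* = G_0 - G_0 \Xi_0 G_0 = 0$ combined with $\kr \theta_0 = \{0\}$ yields $G_0 \theta_0 = 0$. The inverse formula \eqref{invY} is verified on $\BD$ by the direct multiplication $Y(z) \cdot [I_p + zE_p^* T_G^*(T_GT_G^*)^{-1} H_G (I - zS_p)^{-1} E_p] = I_p$, another state-space-style calculation using the same intertwinings.

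The main technical obstacle is Wiener summability. For this I expect to invoke a lemma established in Section~2 of the paper (where the equality $k = p-m$ is also derived) to the effect that, in the Wiener setting with $T_GT_G^*$ strictly positive, the operator $T_G^*(T_GT_G^*)^{-1}$ maps an $\ell^1$-summable column (such as $H_G E_p$) to an $\ell^1$-summable column; this is a matrix-valued Wiener-algebra statement in the spirit of Wiener's $1/f$ theorem. Granting it, \eqref{defYj} shows $Y \in \sW_+^{p\ts p}$ and \eqref{invY} gives $Y^{-1} \in \sW_+^{p\ts p}$; since both factors lie in the matrix Wiener algebra, $Y$ is invertible there, so $\det Y(z) \neq 0$ on $\overline{\BD}$, and $Y^{-1} \in H^\infty_{p\ts p}$ makes $Y$ outer. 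The identity $G(z)Y(z) = G_0$ then extends from $\BD$ to $\overline{\BD}$ by continuity.

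The remaining claims are then bookkeeping. For (ii), $\theta = Y\theta_0 \in \sW_+^{p\ts k}$, and $G\theta = G_0\theta_0 = 0$ gives $\ran T_\theta \subseteq \kr T_G$. Equality $\ran T_\theta = \kr T_G$ together with inner-ness of $\theta$ follows from Beurling--Lax applied to the shift-invariant subspace $\kr T_G$: its wandering subspace $E_p^* P_{\kr T_G} E_p \BC^p$ has dimension $k = p - m$ and factors as $\theta_0 \theta_0^*$ with $\theta_0$ of full column rank, forcing $T_\theta$ to be isometric with the prescribed range. For (iii), a direct evaluation using $G_0\Xi_0 = I_m$ and $G_0\theta_0 = 0$ shows $GY\Xi_0 = I_m$, $GY\theta_0 = 0$, $HY\Xi_0 = 0$, $HY\theta_0 = I_{p-m}$, which yields \eqref{invGH1}; the parametrization \eqref{allsolW} is then obtained by applying this inverse to an arbitrary solution $X$ and setting $V := HX \in \sW_+^{(p-m)\ts m}$. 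Finally, for \eqref{H2idW}, isometry of $T_\theta$ gives $\|T_\theta T_V E_m u\| = \|T_V E_m u\|$, while the key orthogonality $\langle T_{Y\Xi_0} E_m u,\, T_\theta T_V E_m u\rangle = 0$ reduces to $T_\theta^* T_{Y\Xi_0} E_m = 0$, one more direct operator computation using the formulas for $Y$, $\theta_0$ and $\Xi_0$; the Pythagorean identity and hence the least-squares property for $\Xi = Y\Xi_0$ then follow.
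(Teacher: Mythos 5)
Your plan follows the paper's architecture quite closely: the $\ell^1$-preservation lemma for $(T_GT_G^*)^{-1}$ (Lemma \ref{lemW1}), the algebra giving $G_0\Xi_0=I_m$ and $G_0\tht_0=0$, the block inverse \eqref{invGH1}, the parametrization via $V=HX$, and the Pythagorean identity from isometry of $T_\tht$ plus orthogonality of the coefficient column of $\Xi$ to $\im T_\tht$. One step you take differently is the passage to the boundary: you want $\det Y\neq 0$ on $\ol{\BD}$ because both $Y$ and the right-hand side of \eqref{invY} lie in the Wiener algebra. That route is viable, but note that Wiener membership of the right-hand side of \eqref{invY} is \emph{not} what your lemma directly delivers: its Taylor coefficients are $E_p^*T_G^*(T_GT_G^*)^{-1}H_GS_p^{n-1}E_p$, so summability in $n$ is a row condition, whereas Lemma \ref{lemW1} controls columns. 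You need to pass to the starred function, whose coefficient column is $H_G^*(T_GT_G^*)^{-1}T_GE_p=H_{G_*}(T_GT_G^*)^{-1}T_GE_p$, and use that $H_{G_*}$ and $(T_GT_G^*)^{-1}$ preserve $\ell^1_+$. (The paper instead proves $\det Y(\l)\neq0$ for $\l\in\BT$ by contradiction, using $G(\l)Y(\l)=G_0$, $\kr G_0=\im\tht_0$ and $\tht(\l)^*\tht(\l)=I_{p-m}$, and only then invokes Wiener's theorem to get $Y^{-1}\in\sW_+^{p\ts p}$.) This is fixable bookkeeping.

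The genuine gap is item (ii), the innerness of $\tht=Y\tht_0$. Beurling--Lax gives you \emph{some} inner $\Psi$ with $\im T_\Psi=\kr T_G$, and the identity $\tht_0\tht_0^*=E_p^*(I-T_G^*(T_GT_G^*)^{-1}T_G)E_p$ relates $\tht_0$ to that subspace, but nothing in your sketch identifies the specific function $Y(\cdot)\tht_0$ with $\Psi$ up to a constant unitary: a priori $\tht$ could satisfy $G\tht=0$ and $\rank\tht_0=p-m$ and still fail to take isometric boundary values. The substantive content, which your plan omits, is the computation of the Fourier coefficients $\sum_{i\geq 0}Y_i^*Y_{i+j}$ of $Y^*Y$ on the circle (Lemma \ref{lemtht1}) together with the normalization $\tht_0^*\bigl(I_p+E_p^*H_G^*(T_GT_G^*)^{-1}H_GE_p\bigr)\tht_0=I_{p-m}$ (Lemma \ref{lemtht2}); these are exactly what show that the block columns of $T_\tht$ are isometries with mutually orthogonal ranges, i.e.\ that $T_\tht$ is an isometry. (The paper's alternative is to match the state-space formula for $\tht$ against the known inner function of \cite[Lemma 2.1]{FtHK-8}.) Since the isometry of $T_\tht$ also underlies \eqref{H2idW}, the least-squares claim, and (in the paper's route) the nonvanishing of $\det Y$ on $\BT$, this step cannot be dispatched by citing Beurling--Lax alone.
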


Item (iii) in the above theorem is closely related to Tolkonnikkov's lemma \cite{Tol81} (see also \cite[Appendix 3, item 10]{Nikol86}). In fact, from Tolkonnikkov's lemma it follows that \eqref{invGH1} holds true with $H$  {on the unit circle $\BT$} being given by
\begin{equation}
\label{TolH}
H(\z)=\tht^*\big(\z)(I_p-\Xi(z)G(\z)\big) \quad (\z\in \BT).
\end{equation}
At the end of   Section \ref{secWiener} (see Remark \ref{remHwtH}) we shall show that  the function $H$ defined  by the above  formula  and the   function $H$   defined in item (iii) of the above theorem are one and the same function. Specifying \eqref{invGH1} for $z=0$ we see that
\begin{equation}
\label{invXiTheta1}
\mat{cc}{\Xi_0 & \Theta_0}^{-1}=\mat{c}{G_0\\ H_0}\quad\mbox{with}\quad H_0=(\Theta_0^*\Theta_0)^{-1}\Theta_0^*(I_p-\Xi_0 G_0).
\end{equation}
Lemma \ref{L:MatRes} in the next section shows that this inversion formula remains true if $G$ is just an $H^\iy$ function.

Theorem  \ref{T:H2cor}, which is our second main result, presents a   (partial) analogue of  Theorem \ref{thmmain1} in an $H^\iy/H^2 $ setting. Let $G\in H_{m\ts p}^\iy$, and assume  that $T_GT_G^*$ to be strictly positive. Then the function $Y$ is still well defined on the open unit  {disc} $\BD$ and $\det Y(z)\not =0$ for each $z\in \BD$. However,  in general,  the  entries of $Y$ and $Y^{-1}$ are just  $H^2$ functions,  and formula \eqref{allsolW} yields $H^2$ solutions rather than $H^\iy$ solutions.  Moreover, if the free parameter $V$ in \eqref{allsolW} is taken from $H_{(p-m)\ts m}^2$,  then all $H^2$ solutions are obtained by \eqref{allsolW} and the $H^2$ norm in the identity \eqref{H2idW} appears in a natural way.

Finally, in Section \ref{Hinfty} we shall prove that  item (ii) carries over to an $H^\iy$ setting (see Proposition \ref{P:Theta}). The fact that $\tht$ is inner  with $\im T_\tht=\kr T_G$ follows from Lemma 2.1 in \cite{FtHK-8}. A more direct proof is given at the  end of Section~\ref{Hinfty}.  The statement that $k=p-m$ is new in the $H^\iy$ setting. For the proof see the final part of Lemma \ref{L:MatRes}.

The paper consists of five  sections, including the present introduction. In  the second section we present a  number of auxiliary results which are all valid in the $H^\iy$ setting.  Section \ref{secWiener} contains the proof  of Theorem  \ref{thmmain1}. Section~\ref{HiyH2} deals with the role of the function $Y$ in the $H^\iy$ case and presents  a partial analogue of Theorem \ref{thmmain1}, including the description of all $H^2$ solutions. In the final section we present a few concluding remarks  and   compute the function $Y$ for the case when $G(z)=\begin{bmatrix} 1+z & -z \end{bmatrix}$.

\paragraph{Notation and terminology.}
By  $\sW$ we denote the Wiener space (cf., item (a)  in \cite[Section XXIX.2]{GGK2}) consisting of all functions on the unit circle that have an absolutely summable Fourier expansion, and   $\sW^{r\ts s}$ stands for the linear space of all ${r\ts s}$ matrix functions of which the entries belong to $\sW$. Thus
\[
F\in \sW^{r\ts s} \ \Longleftrightarrow\  F(e^{it})=\sum_{\nu=-\iy}^\iy F_\nu  e^{ it\nu},  \ \mbox{where}\  \sum _{\nu=-\iy}^\iy \|F_\nu\|<\iy.
\]
As usual we refer to $F_\nu$ as   the $\nu$-th  Fourier coefficient of $F$. We also need the space  $\sW_+^{r\ts s}$ which consists of all  $F\in \sW^{r\ts s}$ that have an analytic extension to the open unit disc $\BD$, that is,
\[
F\in \sW_+^{r\ts s} \ \Longleftrightarrow\  F(e^{it})=\sum_{\nu=0}^\iy F_\nu  e^{ it\nu},  \ \mbox{where}\  \sum _{\nu=0}^\iy \|F_\nu\|<\iy.
\]
Each $F\in \sW^{r\ts s}$ is continuous on the unit circle,  and therefore each  $F\in \sW^{r\ts s}$ defines a (block) Toeplitz  operator $T_F$   mapping $\ell^2_+(\BC^s)$ into $\ell^2_+(\BC^r)$.  With $F\in \sW^{r\ts s}$, we associate the function $F^*\in \sW^{s\ts r}$ defined by $F^*(z)=F(1/\bar{z})^*$ for each $z\in\BT$. Then $T_{F^*}=T_F^*$.  Finally, note that $\sW_+^{r\ts s}\subset H_{r\ts s}^\iy\subset H_{r\ts s}^2$, where  $H_{r\ts s}^\iy$ and $H_{r\ts s}^2$ stand  for the linear spaces consisting of  all $r \ts s$ matrices   with entries in $H^\iy$ and $H^2$, respectively.

\section{Auxiliary results in an $H^\infty$ setting}\label{Hinfty}
\setcounter{equation}{0}

Throughout this section let $G\in H^\infty_{m\ts p}$ and assume that  $T_GT_G^*$ is strictly positive. We shall be dealing  with  the function $Y$ defined by \eqref{defY} and  the matrices  $\Xi_0$ and  $\tht_0$ defined by items (M1) and (M2) in the previous section.  Note that  the function $Y$  and  the matrices $\Xi_0$ and $\tht_0$ are well defined when $G\in H^\infty_{m\ts p}$ and $T_GT_G^*$ is strictly positive; it is not required for this that $G$ belongs to a Wiener space.

In this section we shall derive a number of auxiliary results that will be useful in proving Theorem \ref{thmmain1} in Section \ref{secWiener}. These auxiliary results will also allow us to present a partial generalization of Theorem \ref{thmmain1} in a $H^\iy/H^2$  {context in Section \ref{HiyH2}.}

The first result only involves the matrices $G_0$, $\Xi_0$ and $\tht_0$.

\begin{lem}\label{L:MatRes}
Let $\Xi_0$ and $\Theta_0$ be as in items \textup{(M1)} and \textup{(M2)} in the previous section. Then the  matrix $\begin{bmatrix} \Xi_0&\Theta_0\end{bmatrix}$ is invertible with inverse given by
\begin{equation}
\label{invXiTheta}
\mat{cc}{\Xi_0 & \Theta_0}^{-1}=\mat{c}{G_0\\ H_0}\quad\mbox{with}\quad H_0=(\Theta_0^*\Theta_0)^{-1}\Theta_0^*(I_p-\Xi_0 G_0).
\end{equation}
In particular, we have $k=p-m$ and $\im \tht_0=\kr G_0$.
\end{lem}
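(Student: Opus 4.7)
The key algebraic fact driving the proof is the identity $E_m^*T_G=G_0E_p^*$ (equivalently $T_G^*E_m=E_pG_0^*$), which holds because $T_G$ is block lower triangular with first block row $[G_0,0,0,\ldots]$. Using this identity, one computes
\[
G_0\Xi_0=(G_0E_p^*)T_G^*(T_GT_G^*)^{-1}E_m=E_m^*T_GT_G^*(T_GT_G^*)^{-1}E_m=I_m,
\]
and
\[
G_0\Theta_0\Theta_0^*=G_0-(G_0E_p^*)T_G^*(T_GT_G^*)^{-1}T_GE_p=G_0-E_m^*T_GE_p=0.
\]
Since $\kr\Theta_0=\{0\}$ makes $\Theta_0^*\Theta_0$ invertible, the second identity forces $G_0\Theta_0=0$. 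A block-by-block computation of $\sbm{G_0\\ H_0}\sbm{\Xi_0 & \Theta_0}$, using $G_0\Xi_0=I_m$, $G_0\Theta_0=0$, and the defining formula for $H_0$, then produces $I_{m+k}$, so $\sbm{\Xi_0 & \Theta_0}$ is left invertible and $m+k\leq p$.

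Moreover, $G_0\Xi_0=I_m$ forces $G_0$ to have full row rank $m$, hence $\dim\kr G_0=p-m$, and $\im\Theta_0\subseteq\kr G_0$ gives $k\leq p-m$. To upgrade this one-sided inverse into a genuine inverse it remains to prove $k=p-m$. Write $\Theta_0\Theta_0^*=I_p-A$ with $A=E_p^*\Pi E_p$, where $\Pi=T_G^*(T_GT_G^*)^{-1}T_G$ is the orthogonal projection of $\ell^2_+(\BC^p)$ onto $(\kr T_G)^\perp$. Since $\langle Av,v\rangle=\|\Pi E_pv\|^2\leq\|E_pv\|^2=\|v\|^2$, we have $0\leq A\leq I_p$, and
\[
\kr(I_p-A)=\{v\in\BC^p:\Pi E_pv=E_pv\}=\{v:E_pv\perp\kr T_G\}=(E_p^*\kr T_G)^\perp.
\]
Consequently $k=\rank(\Theta_0\Theta_0^*)=p-\dim\kr(I_p-A)=\dim E_p^*\kr T_G$.

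The crux of the proof is therefore the identity $E_p^*\kr T_G=\kr G_0$. The inclusion $\subseteq$ is immediate from $T_Gu=0\Rightarrow G_0u_0=0$. For $\supseteq$, given $v\in\kr G_0$, I exploit the surjectivity of $T_G$ (which follows from strict positivity of $T_GT_G^*$) to choose $z\in\ell^2_+(\BC^p)$ with $T_Gz=-(G_1v,G_2v,\ldots)$; the right-hand side belongs to $\ell^2_+(\BC^m)$ since $G\in H^\iy$ implies $Gv\in H^2$. Using the shift commutation $T_GS_p=S_mT_G$, the vector $u:=E_pv+S_pz$ satisfies $u_0=v$ and
\[
T_Gu=T_GE_pv+S_mT_Gz=(0,G_1v,G_2v,\ldots)-(0,G_1v,G_2v,\ldots)=0,
\]
so $v\in E_p^*\kr T_G$. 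This forces $k=p-m$, whence $\im\Theta_0=\kr G_0$ by dimension matching. The main obstacle is precisely this last construction: even though the compression $A$ fails to be idempotent in general, the rank of $I_p-A$ still coincides with $\dim\kr G_0$, and this can only be established by genuinely producing $\ell^2$-preimages in $\kr T_G$, which requires both the surjectivity of $T_G$ and the shift commutation rather than an abstract dimension count.
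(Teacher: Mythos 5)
Your proposal is correct, and it reaches the conclusion by a genuinely different route at the decisive step. The first half coincides with the paper's argument: both proofs use $E_m^*T_G=G_0E_p^*$ to get $G_0\Xi_0=I_m$ and $G_0\Theta_0\Theta_0^*=0$, hence $G_0\Theta_0=0$, and then verify block-by-block that $\sbm{G_0\\ H_0}\sbm{\Xi_0 & \Theta_0}=I_{m+k}$. The difference is in how one passes from this one-sided inverse to a two-sided one. The paper asserts that the same identities also yield $\sbm{\Xi_0 & \Theta_0}\sbm{G_0\\ H_0}=I_p$ and reads off $k=p-m$ from the resulting invertibility; note that the identity $\Xi_0G_0+\Theta_0H_0=I_p$ amounts to $\im(I_p-\Xi_0G_0)\subseteq \im\tht_0$, which is not a formal consequence of the four block products alone while the matrix is not yet known to be square (it can be checked using the injectivity of $T_G^*$, but the paper does not spell this out). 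You instead determine $k$ directly: $k=\rank(\Theta_0\Theta_0^*)=\dim E_p^*\kr T_G$ via the spectral argument for the contraction $A=E_p^*\Pi E_p$, and then prove $E_p^*\kr T_G=\kr G_0$ by explicitly lifting each $v\in\kr G_0$ to an element $E_pv+S_pz\in\kr T_G$ using the surjectivity of $T_G$ and the intertwining $T_GS_p=S_mT_G$. Once $k=p-m$ is in hand, squareness converts the left inverse into the inverse and the dimension count gives $\im\tht_0=\kr G_0$. Your route is somewhat longer but self-contained, and it supplies precisely the analytic input that the paper's phrase ``combining the above identities'' leaves implicit; it also isolates the structural fact $E_p^*\kr T_G=\kr G_0$, which is of independent interest (compare the role of $\kr T_G=\im T_\tht$ in Proposition 2.5).
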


\begin{proof}[\bf Proof]
Note that $G_0=E_m^*T_G E_p$ and $G_0 E_p^*=E_m^*T_G$. Hence $E_m^*T_G=E_m^*T_GE_pE_p^*$.  These identities give
\begin{align*}
G_0\Xi_0&=G_0E_p^*T_G^*(T_GT_G^*)^{-1}E_m\\
&=E_m^*T_G  T_G^*(T_GT_G^*)^{-1}E_m=E_m^*E_m=I_m,
\end{align*}
and
\begin{align*}
G_0\Theta_0\Theta_0^*&=E_m^*T_G E_p(I_p-E_p^* T_G^*(T_GT_G^*)^{-1}T_GE_p)\\
&=(E_m^*-E_m^*T_G E_pE_p^* T_G^*(T_GT_G^*)^{-1})T_GE_p\\
&=(E_m^*-E_m^*T_G T_G^*(T_GT_G^*)^{-1})T_GE_p=(E_m^*-E_m^*)T_GE_p=0.
\end{align*}
Since $\im \Theta_0^*=\BC^k$, the latter implies $G_0\Theta_0=0$.

With the identities $G_0\Xi_0=I_m$ and $G_0\Theta_0=0$ we obtain
\[
(I-\Xi_0 G_0)\Xi_0=\Xi_0-\Xi_0=0\ands
(I-\Xi_0 G_0)\Theta_0=\Theta_0-0=\Theta_0.
\]
Note that $(\Theta_0^*\Theta)^{-1}\Theta_0^*$ is a left inverse of $\Theta_0$. Hence
\[
H_0\Xi_0=0\ands H_0\Theta_0=I_k.
\]
Combining the above identities shows
\begin{equation}
\label{ids2}
\mat{cc}{\Xi_0 & \Theta_0} \mat{c}{G_0\\ H_0}=I_p\ands
\mat{c}{G_0\\ H_0}\mat{cc}{\Xi_0 & \Theta_0}=\mat{cc}{I_m& 0\\0& I_k}.
\end{equation}
It follows that $\begin{bmatrix}\Xi_0 & \tht_0\end{bmatrix}$ is invertible and  {that} its inverse is given by \eqref{invXiTheta}.   In particular, $\begin{bmatrix}\Xi_0 & \Theta_0\end{bmatrix}$ is a square matrix, which implies $p=m+k$. Hence $k=p-m$. Moreover, $G_0\tht_0=0$ implies $\im \tht_0\subset \kr G_0$. We have $\kr\tht_0=\{0\}$, so that $\rank \tht_0=p-m$. Hence $\dim \im\tht_0=p-m$. On the other hand, we have $\im G_0=\BC^m$, which implies $ \dim \kr G_0=p-m$. Therefore $\im \tht_0= \kr G_0$.
\end{proof}

Lemma \ref{L:MatRes} can be seen as the special case of Proposition \ref{P:YH} below where $z=0$. To derive the later result we require the following observation about the function $Y$.

\begin{prop}\label{propY2}
Let $G\in H^\infty_{m\ts p}$ and assume that $T_GT_G^*$ is strictly positive. Then the function $Y$  defined by \eqref{defY}  is analytic on $\BD$, $\det Y(z)\not =0$ for each $z\in \BD$, and
\begin{equation}\label{invY2}
Y(z)^{-1}=I_p+zE_p^*T_G^*(T_GT_G^*)^{-1}H_G(I-z S_p)^{-1}E_p\quad (z\in\BD).
\end{equation}
In particular, the function $Y(\cdot )^{-1}$ is analytic on $\BD$. Moreover, we have
\begin{equation}\label{GYG0}
G(z)Y(z)=G_0 \quad (z\in\BD).
\end{equation}
\end{prop}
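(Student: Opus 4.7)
The plan is to establish the four claims in order: analyticity of $Y$ on $\BD$, non-vanishing of $\det Y(z)$, the inverse formula, and the identity $G(z)Y(z)=G_0$. Throughout I will write $R:=T_G^*(T_GT_G^*)^{-1}$ for the (bounded) Moore-Penrose right inverse of $T_G$, and denote the proposed inverse by $Z(z):=I_p+zE_p^*RH_G(I-zS_p)^{-1}E_p$. Analyticity of $Y$ and $Z$ on $\BD$ is immediate from the operator-norm Neumann expansions of $(I-zS_p^*)^{-1}$ and $(I-zS_p)^{-1}$, valid for $|z|<1$ since $\|S_p\|=\|S_p^*\|=1$, all remaining factors being bounded.

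The main step is to verify $Y(z)Z(z)=I_p$ on $\BD$ directly; this yields the inverse formula, invertibility of $Y(z)$, and (since the matrices are square) also $Z(z)Y(z)=I_p$. After expanding the product and clearing the factors $(I-zS_p^*)$ on the left and $(I-zS_p)$ on the right, the equation collapses, independently of $z$, to the single operator identity
\begin{equation*}
S_p^*RH_G + RH_GE_pE_p^*RH_G \;=\; RH_GS_p
\end{equation*}
on $\ell^2_+(\BC^p)$. To prove this I rely on three standard ingredients: (i) the Toeplitz commutation $T_GS_p=S_mT_G$ together with its adjoint $S_p^*T_G^*=T_G^*S_m^*$; (ii) the Hankel commutation $H_GS_p=S_m^*H_G$; and (iii) the Toeplitz-Hankel identity $S_m^*T_G=T_GS_p^*+H_GE_pE_p^*$. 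Combining (i) and (iii) yields $T_GT_G^*S_m^*-S_m^*T_GT_G^*=-H_GE_pE_p^*T_G^*$; multiplying this on the left by $R$ and on the right by $(T_GT_G^*)^{-1}$, and invoking the adjoint of (i) once more, produces $S_p^*R-RS_m^*=-RH_GE_pE_p^*R$; composing on the right with $H_G$ and using (ii) to replace $S_m^*H_G$ by $H_GS_p$ then delivers the displayed identity.

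For the last claim, fix $u\in\BC^p$ and set $v:=RH_GE_pu\in\ell^2_+(\BC^p)$, so that $Y(z)u=u-z\hat v(z)$ where $\hat v(z):=E_p^*(I-zS_p^*)^{-1}v=\sum_{n\ge 0}v_nz^n$ is the analytic extension of $v$ to $\BD$. Since $G\in H^\iy_{m\ts p}$, the operator $T_G$ acts as multiplication by $G$ on $H^2_p$, whence $\widehat{T_Gv}(z)=G(z)\hat v(z)$. But $T_Gv=T_GRH_GE_pu=H_GE_pu=(G_1u,G_2u,\ldots)$, whose generating function is $z^{-1}(G(z)-G_0)u$. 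Hence $zG(z)\hat v(z)=(G(z)-G_0)u$, and premultiplying $Y(z)u$ by $G(z)$ gives $G(z)Y(z)u=G(z)u-(G(z)-G_0)u=G_0u$, as required.

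The only non-routine step is the derivation of the pivotal operator identity in the second paragraph; once it is in hand, everything else reduces to routine manipulations with Neumann series and the right-inverse property $T_GR=I$.
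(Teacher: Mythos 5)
Your proof is correct, but it takes a genuinely different route from the paper's. The paper writes $Y(z)=D+zC(I-zA)^{-1}B$ as a realization with $A=S_m^*$, $B=(T_GT_G^*)^{-1}H_GE_p$, $C=-E_p^*T_G^*$, $D=I_p$, computes the associate operator $A^{\ts}=A-BD^{-1}C=(T_GT_G^*)^{-1}S_m^*T_GT_G^*$ (similar to $S_m^*$, hence of spectral radius one), and then invokes the state-space inversion theorem of \cite{BGKR08} to \emph{derive} the formula \eqref{invY2}. You instead take the candidate inverse $Z(z)$ as given and verify $Y(z)Z(z)=I_p$ directly, reducing the verification to the operator identity $S_p^*RH_G+RH_GE_pE_p^*RH_G=RH_GS_p$ with $R=T_G^*(T_GT_G^*)^{-1}$, which you obtain from the commutation relations $T_GS_p=S_mT_G$, $H_GS_p=S_m^*H_G$ and the Toeplitz--Hankel identity $S_m^*T_G=T_GS_p^*+H_GE_pE_p^*$; I checked this chain and it is sound (note that clearing the resolvents requires the full operator bracket, not just its $E_p^*(\cdot)E_p$ compression, to vanish, which your argument does deliver since $I-zS_p$ and $I-zS_p^*$ are invertible on $\BD$). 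The underlying algebra is essentially the same --- the paper's computation of $A^{\ts}$ uses the identity $S_m^*T_GT_G^*-T_GT_G^*S_m^*=H_GE_pE_p^*T_G^*$ that also drives your derivation --- but your version is self-contained and avoids the external realization-inversion theorem, at the price of having to know the inverse in advance rather than producing it. Your proof of \eqref{GYG0} is the paper's argument in only slightly different clothing: the paper reads $T_G\,\mathrm{col}(Y_1,Y_2,\dots)=-\mathrm{col}(G_1,G_2,\dots)$ off \eqref{defYj} and translates it into the functional identity, while you phrase the same computation via $T_G$ acting as multiplication by $G$ on $H^2_p$ and the right-inverse property $T_GR=I$.
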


\begin{proof}[\bf Proof.]
That fact that $S_p$ has spectral radius equal to 1, yields that $Y$ is analytic on $\BD$. Since $S_p^*T_G^*=T_G^*S_m^*$ we can rewrite $Y$ as
\begin{align*}
Y(z)
&=I_p-zE_p^*(I-zS_p^*)^{-1}T_G^*(T_GT_G^*)^{-1}H_GE_p\\
&=I_p-zE_p^*T_G^*(I-zS_m^*)^{-1}(T_GT_G^*)^{-1}H_GE_p\\
&=D+zC(I-zA)^{-1}B,
\end{align*}
where in the last identity
\[
A=S_m^*,\quad B=(T_GT_G^*)^{-1}H_GE_p,\quad C=-E_p^*T_G^*,\quad D=I_p.
\]
Note that $H_GE_p=S_m^*T_G E_p$ and
\begin{align*}
S_m^*T_GE_pE_p^*T_G^*
&=S_m^*T_G(I-S_pS_p^*)T_G^*
=S_m^*T_GT_G^*-S_m^*S_mT_GT_G^*S_m^*\\
&=S_m^*T_GT_G^*-T_GT_G^*S_m^*.
\end{align*}
This yields that $A^\ts:=A-BD^{-1}C$ can be written as
\begin{align*}
A^\ts
&=S_m^*+(T_GT_G^*)^{-1}H_GE_pE_p^*T_G^*
=S_m^*+(T_GT_G^*)^{-1}S_m^*T_GE_pE_p^*T_G^*\\
&=S_m^*+(T_GT_G^*)^{-1}(S_m^*T_GT_G^*-T_GT_G^*S_m^*)
=(T_GT_G^*)^{-1}S_m^*T_GT_G^*.
\end{align*}
Thus $A^\ts$ is similar to $S_m^*$, and hence has spectral radius equal to 1. Then, by standard state space inversion results, cf., Theorem 2.1 in \cite{BGKR08} (with $\l=1/z$), it follows that $Y(z)$ is invertible for each $z\in\BD$ with inverse given by
\begin{align*}
Y(z)^{-1}
&=D^{-1}-zD^{-1}C(I-zA^\ts)^{-1}BD^{-1}\\
&=I+zE_p^*T_G^*(I-z(T_GT_G^*)^{-1}S_m^*T_GT_G^*)^{-1}(T_GT_G^*)^{-1}H_GE_p\\
&=I+zE_p^*T_G^*(T_GT_G^*)^{-1}(I-zS_m^*)^{-1}H_GE_p.
\end{align*}
Since  $S_m^*H_G=H_GS_p$,  we have $(I-zS_m^*)^{-1}H_G=H_G(I-zS_p)^{-1}$, and hence \eqref{invY2} holds. Note that the spectral radius of $S_p$ is equal to 1, which implies that the function $Y(\cdot )^{-1}$ is analytic on $\BD$.

Finally, we prove that \eqref{GYG0} holds. Let $Y_0, Y_1, Y_2, \ldots$ be the Taylor coefficients of $Y$ at zero.  {As observed in \eqref{defYj}, we have} $Y_0=I_p$ and
\begin{equation}
\label{defY2a}
\begin{bmatrix}Y_1\\ Y_2\\ \vdots\end{bmatrix}=-T_G^* (T_G T_G^*)^{-1} \begin{bmatrix}G_1\\ G_2\\ \vdots\end{bmatrix}, \quad \mbox{and hence}\quad  T_G\begin{bmatrix}Y_1\\ Y_2\\ \vdots\end{bmatrix}=-\begin{bmatrix}G_1\\ G_2\\ \vdots\end{bmatrix}.
\end{equation}
The latter identity  is equivalent to
\[
G(z)\left(\frac{Y(z)-I_p}{z}\right)=-\frac{G(z)-G_0}{z}  \quad (z\in \BD).
\]
Multiplying both sides of the above identity by $z$ and adding $G(z)$ on either side yields \eqref{GYG0}.
\end{proof}

\begin{prop}\label{P:YH}
Let $G\in H_{m\ts p}^\iy$ and assume that $T_GT_G^*$ is strictly positive. Let $Y$ be the function defined by \eqref{defY}, and define the functions $\Xi$ and $\tht$ by \eqref{defXiTheta}, with $\Xi_0$ and $\Theta_0$ the matrices in items \textup{(M1)} and \textup{(M2)} of the previous section. Consider the matrix function $H$ defined by
\begin{equation}\label{defH}
H(z)= H_0Y(z)^{-1},\ z\in\BD,\quad \mbox{with}\quad H_0=(\tht_0^*\tht_0)^{-1}(I_p-\Xi_0G_0).
\end{equation}
Then $H$ is analytic on $\BD$,
\begin{equation}
\label{invGH2}
\det \begin{bmatrix} G(z)\\[.2cm] H(z) \end{bmatrix}\not =0 \quad \mbox{and}\quad
\begin{bmatrix}
G(z)\\[.2cm]  H(z) \end{bmatrix}^{-1}=\begin{bmatrix}\Xi(z) &\tht(z)  \end{bmatrix} \quad (z\in \BD).
\end{equation}
\end{prop}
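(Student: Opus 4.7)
The plan is to verify the claimed inverse relation by a direct block computation, using Proposition \ref{propY2} to handle the $G$-row and Lemma \ref{L:MatRes} to handle the matrix identities on constants. No new analytic work seems to be required.

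First, analyticity of $H$ on $\BD$ is immediate: Proposition \ref{propY2} tells us that $Y(\cdot)^{-1}$ is analytic on $\BD$, and $H_0$ is just a constant matrix, so $H(z)=H_0 Y(z)^{-1}$ is analytic on $\BD$.

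Next, I would compute the product
\begin{equation*}
\begin{bmatrix} G(z)\\ H(z) \end{bmatrix}\begin{bmatrix} \Xi(z) & \Theta(z) \end{bmatrix}
=\begin{bmatrix} G(z)Y(z)\Xi_0 & G(z)Y(z)\Theta_0\\ H_0 Y(z)^{-1} Y(z)\Xi_0 & H_0 Y(z)^{-1} Y(z)\Theta_0 \end{bmatrix}
\end{equation*}
and simplify it entry by entry. The top row collapses via the identity $G(z)Y(z)=G_0$ from Proposition \ref{propY2} to $\begin{bmatrix} G_0\Xi_0 & G_0\Theta_0\end{bmatrix}$, while the bottom row trivially reduces to $\begin{bmatrix} H_0\Xi_0 & H_0\Theta_0\end{bmatrix}$. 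Lemma \ref{L:MatRes} then gives $G_0\Xi_0=I_m$, $G_0\Theta_0=0$, $H_0\Xi_0=0$, $H_0\Theta_0=I_{p-m}$, so the whole product is $I_p$.

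Finally, I would observe that both factors are square matrices of size $p\times p$ (this uses the equality $k=p-m$ from Lemma \ref{L:MatRes}, so that $\Theta_0$, and hence $\Theta(z)$, has $p-m$ columns). Since a square matrix with a one-sided inverse is invertible, with inverse equal to that one-sided factor, we conclude that $\begin{bmatrix} G(z)\\ H(z)\end{bmatrix}$ is invertible for every $z\in\BD$ and that its inverse is $\begin{bmatrix} \Xi(z) & \Theta(z)\end{bmatrix}$, which is exactly \eqref{invGH2}. I do not expect any step here to be a genuine obstacle; the proposition is essentially a ``pointwise lifting'' of Lemma \ref{L:MatRes} made possible by the identity $GY=G_0$.
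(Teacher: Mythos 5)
Your proof is correct and is essentially the paper's own argument: the paper likewise writes $\sbm{G(z)\\ H(z)}=\sbm{G_0\\ H_0}Y(z)^{-1}$ and $\sbm{\Xi(z)&\tht(z)}=Y(z)\sbm{\Xi_0&\tht_0}$ using $G(z)Y(z)=G_0$ from Proposition \ref{propY2}, and then observes that the claim reduces to the constant case $z=0$ settled in Lemma \ref{L:MatRes}. Your entrywise computation of the product together with the square-matrix/one-sided-inverse remark is just a slightly more explicit rendering of that same reduction.
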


\begin{proof}[\bf Proof.]
Since $Y$ is analytic on $\BD$, clearly $H$ defined by \eqref{defH} is analytic on $\BD$. Furthermore, using Proposition \ref{propY2} we find that $G(z)=G_0 Y(z)^{-1}$, $z\in\BD$. Thus
\[
\mat{c}{G(z)\\ H(z)}=\mat{c}{G_0\\ H_0}Y(z)^{-1},\quad \mat{cc}{\Xi(z)& \tht(z)}=Y(z)\mat{cc}{\Xi_0 & \tht_0}\quad (z\in\BD).
\]
This shows that our claim reduces to the case $z=0$, which was proved in Lemma \ref{L:MatRes}.
\end{proof}

We conclude with two auxiliary results, the first is about the function $\Xi$ and the second about  $\tht$.

\begin{lem}\label{lemXi2}
Let $G\in H_{m\ts p}^\iy$ and assume that $T_GT_G^*$ is strictly positive. Then the function $\Xi$  defined by the first part  of \eqref{defXiTheta} is also given by
\begin{equation}
\label{altdefXi}
\Xi(z) =E_p^*(I-zS_p^*)^{-1}T_G^*(T_GT_G^*)^{-1}E_m\quad (z\in \BD)
\end{equation}
\end{lem}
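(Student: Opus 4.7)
The approach is a direct algebraic computation, exploiting only the operator identities already gathered in the proof of Proposition \ref{propY2}. Writing $R := T_G^*(T_G T_G^*)^{-1}$ for the Moore--Penrose right inverse, so that $\Xi_0 = E_p^* R E_m$, substitution of the formulas for $Y(z)$ and $\Xi_0$ yields
\[
Y(z)\Xi_0 \;=\; E_p^* R E_m \;-\; z\, E_p^*(I-zS_p^*)^{-1} R\, H_G E_p E_p^* R E_m,
\]
while the expression on the right-hand side of \eqref{altdefXi} equals $E_p^*(I-zS_p^*)^{-1} R E_m$. Using the resolvent identity $(I-zS_p^*)^{-1} - I = z(I-zS_p^*)^{-1}S_p^*$ to rewrite the first summand above, the claimed equality $\Xi(z) = E_p^*(I-zS_p^*)^{-1} R E_m$ reduces to the single operator equation
\[
S_p^* R E_m \;+\; R\, H_G E_p E_p^* R E_m \;=\; 0 \quad \text{in } \ell_+^2(\mathbb{C}^p).
\]

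To verify this, I will use the commutator identity $H_G E_p E_p^* T_G^* = S_m^* T_G T_G^* - T_G T_G^* S_m^*$ that is established inside the proof of Proposition \ref{propY2}. Sandwiching this identity between copies of $(T_G T_G^*)^{-1}$ and invoking the intertwining $S_p^* T_G^* = T_G^* S_m^*$ gives the clean rewriting
\[
R\, H_G E_p E_p^* R \;=\; R\, S_m^* \;-\; S_p^* R.
\]
Applying both sides to $E_m$, the first summand on the right vanishes because $S_m^* E_m = 0$ (the adjoint shift annihilates the first block coordinate), leaving precisely $-S_p^* R E_m$, which is exactly what was required. I do not anticipate any genuine obstacle: the argument is a short bookkeeping exercise resting entirely on the commutator identity already present in the paper, together with the basic relation $S_m^* E_m = 0$.
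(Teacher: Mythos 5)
Your proof is correct and takes essentially the same route as the paper: both arguments combine the resolvent identity $(I-zS_p^*)^{-1}=I+z(I-zS_p^*)^{-1}S_p^*$ with the commutator relation $H_GE_pE_p^*T_G^*=S_m^*T_GT_G^*-T_GT_G^*S_m^*$ (equivalently $E_pE_p^*=I-S_pS_p^*$ plus the shift intertwinings) and the fact that $S_m^*E_m=0$. The only difference is organizational: you isolate the clean operator identity $R\,H_GE_pE_p^*R=RS_m^*-S_p^*R$ before applying it to $E_m$, whereas the paper computes $H_GE_p\Xi_0$ directly and then substitutes.
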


\begin{proof}[\bf Proof]
Recall that $\Xi_0=E_p^*T_G^*(T_GT_G^*)^{-1}E_m$ This yields
\begin{align*}
H_G E_p\Xi_0
&=S_m^*T_GE_pE_p^*T_G^*(T_GT_G^*)^{-1}E_m\\
&=S_m^*T_G(I-S_pS_p^*)T_G^*(T_GT_G^*)^{-1}E_m\\
&=S_m^*E_m-S_m^*S_mT_GT_G^*S_m^*(T_GT_G^*)^{-1}E_m\\
&=-T_GT_G^*S_m^*(T_GT_G^*)^{-1}E_m.
\end{align*}
With this observation we obtain that the function $\Xi$ is also given by
\begin{align*}
\Xi(z)
&=Y(z)\Xi_0
=\Big(I-zE_p^*(I-zS_p^*)^{-1}T_G^*(T_GT_G^*)^{-1}H_GE_p\Big)\Xi_0\\
&=\Xi_0+zE_p^*(I-zS_p^*)^{-1}T_G^*S_m^*(T_GT_G^*)^{-1}E_m\\
&=E_p^*\Big(I+z(I-zS_p^*)^{-1}S_p^*\Big)T_G^*(T_GT_G^*)^{-1}E_m\\
&=E_p^*(I-zS_p^*)^{-1}T_G^*(T_GT_G^*)^{-1}E_m.
\end{align*}
This proves \eqref{altdefXi}.
\end{proof}

\begin{prop}\label{P:Theta}
Let $G\in H_{m\ts p}^\iy$ and assume that $T_GT_G^*$ is strictly positive. The function $\tht$ defined in \eqref{defXiTheta} belongs to $H^\iy_{p\ts (m-p)}$ and is an inner function with $\im T_\tht=\kr T_G$.
\end{prop}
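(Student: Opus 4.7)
The plan is to identify $\tht$ with the inner function associated, via the Beurling--Lax--Halmos theorem, with the shift-invariant subspace $\kr T_G\subseteq\ell^2_+(\BC^p)$. By Proposition~\ref{propY2} combined with Lemma~\ref{L:MatRes}, $G(z)\tht(z) = G(z)Y(z)\tht_0 = G_0\tht_0 \equiv 0$ on $\BD$. Hence, for every $u\in\BC^{p-m}$, the Taylor-coefficient vector $Vu\in\ell^2_+(\BC^p)$ of $\tht u\in H^2_p$ lies in $\kr T_G$, and using \eqref{defYj} one computes
\[
Vu = E_p\tht_0 u - S_p\, T_G^*(T_GT_G^*)^{-1}H_G E_p\,\tht_0 u.
\]
This is precisely the general form of an element of the wandering subspace $W:=\kr T_G\ominus S_p\kr T_G$: writing $v\in W$ as $v\in\kr T_G$ with $S_p^*v\in\im T_G^*$ forces $E_p^*v\in\kr G_0$ and then determines the remaining coefficients. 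By Lemma~\ref{L:MatRes}, $\dim W = \dim\kr G_0 = p-m$.

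Next I would show $V:\BC^{p-m}\to W$ is isometric. By orthogonality of the two summands, $\|Vu\|^2 = u^*\tht_0^*(I_p+K)\tht_0 u$ with $K:=E_p^*H_G^*(T_GT_G^*)^{-1}H_G E_p$. Setting $P:=E_p^*T_G^*(T_GT_G^*)^{-1}T_GE_p$, so that $\tht_0\tht_0^* = I_p-P$, and using that $\tht_0$ is injective, the isometry condition reduces (by sandwiching with $\tht_0,\tht_0^*$ and cancelling) to the $p\ts p$ identity
\[
(I_p-P)(I_p+K)(I_p-P) = I_p-P,
\qquad\text{i.e.,}\qquad (I_p-P)K(I_p-P) = P(I_p-P).
\]
Proving this identity is the main obstacle of the argument. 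The computation uses $H_G E_p = S_m^*T_G E_p$ together with the shift-intertwining $T_G S_p = S_m T_G$ to obtain $T_G E_p(I_p-P) = S_m T_G T_G^* S_m^* (T_GT_G^*)^{-1} T_G E_p$, and then collapses the product via the cancellations $S_m^* S_m = I$, $(T_GT_G^*)(T_GT_G^*)^{-1} = I$, and $S_m S_m^* = I - E_m E_m^*$. The only ``surviving'' extra term is proportional to $(I_p-P)G_0^*\Xi_0^*$, which vanishes because $G_0(I_p-P)=0$ (indeed, $(I_p-P)x$ is the zeroth block of a vector in $\kr T_G$, on which $T_G$ vanishes) together with the self-adjointness of $P$.

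Finally, extend $V$ shift-covariantly to $\widetilde{V}:\ell^2_+(\BC^{p-m})\to\ell^2_+(\BC^p)$ by $\widetilde{V}\bigl(\sum_n S_{p-m}^n E_{p-m}u_n\bigr):=\sum_n S_p^n V u_n$. The Wold decomposition of the pure isometry $S_p|_{\kr T_G}$ (purity follows from $\bigcap_n S_p^n\ell^2_+(\BC^p)=\{0\}$) gives $\kr T_G = \bigoplus_{n\geq 0}S_p^n W$ orthogonally, so $\widetilde{V}$ is an isometry onto $\kr T_G$. Comparing actions on the generators $S_{p-m}^n E_{p-m} u$ shows that $\widetilde{V}$ agrees with the formal block Toeplitz operator $T_\tht$ on finitely supported sequences; by boundedness of $\widetilde{V}$, this defines $T_\tht$ as a bounded isometry equal to $\widetilde{V}$. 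Consequently $\tht$ is inner (and therefore $\tht\in H^\iy_{p\ts(p-m)}$) with $\im T_\tht = \kr T_G$, as claimed.
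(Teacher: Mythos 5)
Your argument is correct, and it is worth comparing with the paper, which offers two proofs: a short one that identifies $\tht$ with the Beurling--Lax inner function of $\kr T_G$ by citing \cite[Eq.\ (2.1)]{FtHK-8}, and a ``direct proof'' that verifies column by column that the formal Toeplitz matrix of $\tht$ is an isometry. Your route shares with the direct proof its computational core: your identity $(I_p-P)(I_p+K)(I_p-P)=I_p-P$ is exactly Lemma \ref{lemtht2} (the statement that the zeroth column $\ga_0$ is an isometry), and your cancellation scheme --- $K(I_p-P)=P-G_0^*\Xi_0^*$ with the last term killed by $(I_p-P)G_0^*=\tht_0(G_0\tht_0)^*=0$ --- is the same computation the paper carries out. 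Where you genuinely diverge is in everything surrounding that identity. The paper's direct proof needs the full Lemma \ref{lemtht1} (the sums $\sum_i Y_i^*Y_{i+j}$ for all $j\ge 1$) to establish the mutual orthogonality of the column ranges, and then a separate argument via the inversion formula \eqref{invGH2} to get $\kr T_G\subset\im T_\tht$. You instead prove that $\im\ga_0$ is \emph{exactly} the wandering subspace $W=\kr T_G\ominus S_p\kr T_G$ (your explicit parametrization of $W$ by $v_0\in\kr G_0$ is correct, and $\dim W=p-m$ follows from Lemma \ref{L:MatRes}), after which the Wold decomposition of the pure isometry $S_p|_{\kr T_G}$ delivers the orthogonality of the $S_p^jW$ and the identity $\im T_\tht=\bigoplus_{n\ge0}S_p^nW=\kr T_G$ simultaneously and without further computation. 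In effect you have made the paper's short Beurling--Lax proof self-contained while reusing only the $j=0$ part of its computational machinery; the price is the explicit characterization of $W$, which is itself a useful by-product. Two small points to make explicit in a final write-up: $\im V\subset W$ requires noting both that $T_G(Vu)=0$ (from $G\tht\equiv0$) and that $S_p^*Vu=-T_G^*(T_GT_G^*)^{-1}H_GE_p\tht_0u\in\im T_G^*=(\kr T_G)^\perp$, the latter using that $\im T_G^*$ is closed because $T_GT_G^*$ is invertible; and the passage from the shift-intertwining bounded isometry $\widetilde V$ to a genuine $H^\iy$ symbol should cite the standard characterization of analytic Toeplitz operators, as the paper does via \cite[Section XXIII.3]{GGK2}.
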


\begin{proof}[\bf Proof.]
Using the definition of $Y$ in \eqref{defY} and the fact $H_GE_p=S_m^*T_GE_p$, we see that $\tht$ is also given by
\[
\tht(z)=(I_p-zE_p^*(I-zS_p^*)^{-1}T_G^*(T_G T_G^*)^{-1}S_m^*T_GE_p)\tht_0\quad (z\in\BD).
\]
By comparing this formula with \cite[Eq. (2.1)]{FtHK-8} we conclude that $\tht$ coincides (up to multiplication with a constant unitary matrix from the right) with  the inner function $\wt{\tht}$ satisfying $\im T_G^*= \ell_+^2(\BC^p)\ominus T_{\wt{\tht}} \ell_+^2(\BC^k)$, where $k$ is the number of columns of the matrix $\wt{\tht}(0)$. The existence of $\wt{\tht}$ is guaranteed by the Beurling-Lax theorem. Since $\kr T_G= (\im T_G^*)^\perp$, we conclude that $\kr T_G=\im T_\tht$. Finally, that $k=p-m$, and thus $\tht\in H^\iy_{p\ts (m-p)}$, follows from Lemma \ref{L:MatRes}.
\end{proof}

Note the proof of Proposition \ref{P:Theta} relies heavily on \cite[Lemma 2.1]{FtHK-8}. We also add something to the observations made in Section 2 of \cite{FtHK-8}, namely that $k=p-m$, i.e., $\tht\in H^\iy_{p\in(p-m)}$. This was proved in \cite[Lemma 2.2]{FKR2a} for the case that $G$ is a rational matrix function. We show here that the observation extends to the  non-rational case. Next we  {give a more direct proof of} Proposition \ref{P:Theta}.

\paragraph{Direct proof of Proposition \ref{P:Theta}.} Let $\tht$ be the analytic matrix function on $\BD$ defined by  the second identity in \eqref{defXiTheta}. We already know (see the final part of Lemma  \ref{L:MatRes} that $\tht_0$ has size $p\ts (p-m)$,  and hence $\tht$ is a matrix function of size $p\ts (p-m)$. To prove that $\tht$ is inner, let $ \ga_j$  be  $j$-th column of the block Toeplitz matrix  defined by $\tht$. Thus
\begin{equation}
\label{defgas}
\begin{bmatrix} \ga_0& \ga_1& \ga_2&\ \cdots \end{bmatrix}=\begin{bmatrix}
Y_0\tht_0& 0& 0& \cdots \\
Y_1\tht_0&Y_0\tht_0& 0&  \cdots \\
Y_2\tht_0&Y_1\tht_0&Y_0\tht_0 &\\
\vdots&\vdots&&\ddots
\end{bmatrix}.
\end{equation}
Note that $ \ga_0, \ga_1,  \ga_2,  \cdots$ are bounded linear operators from $\BC^{p-m}$ into $\ell_+^2(\BC^p)$.  This follows from the first identity in \eqref{defY2a}, the fact that $T_G^*(T_GT_G^*)^{-1}$ is a bounded operator from $\ell_+^2(\BC^m)$ into $\ell_+^2(\BC^p)$, and the fact that the first collumn of $H_G$ is a bounded operator from $\BC^p$  {into $\ell_+^2(\BC^m)$.  To prove that $\tht$ is inner it suffices to show that
\begin{itemize}
\item[(C1)]   $\ga_j$ is an isometry mapping   $\BC^{p-m}$   into $\ell_+^2(\BC^p)$ for $j=0, 1, 2, \dots$;
\item[(C2)] $\im  \ga_j \perp \im  \ga_k$  for $k\not =j$.
\end{itemize}
To see this, assume that both conditions are satisfied. Then the operator $T$ defined by be the  infinite  block  lower triangular   matrix on  the right hand side of \eqref{defgas} is an isometry mapping $\ell_+^2(\BC^{p-m})$ into $\ell_+^2(\BC^{p-m})$.   Moreover, $ S_pT=TS_{p-m}$.  It follows that  $T$ is a  Toeplitz operator,  and its defining function  $\tht(\cdot)=Y(\cdot)\tht_0$ belongs to $H_{p\ts (p-m)}^\iy$; cf., \cite[Section XXIII.3]{GGK2}. Thus $T=T_\tht$,  and $\tht$ is inner because  $T_\tht=T$ is an isometry, \cite[Section XXVI.3]{GGK2} or   \cite[Proposition 2.6.2]{FB10}.

In order to show that  conditions (C1) and (C2) are satisfied we need the following two  lemmas.

\begin{lem}\label{lemtht1}
Let $G\in H_{m\ts p}^\iy$, and assume that  $T_GT_G^*$ is strictly positive. Then
\begin{equation*}
\sum_{i=0}^\iy Y_i^*Y_{i+j}=
\left\{
\begin{array}{cl}
I_p +E_p^*H_G^*(T_GT_G^*)^{-1}H_G E_p &\mbox{when $j=0$},\\[.3cm]
-G_0^*E_m^*(S_m^*)^{j-1}(T_GT_G^*)^{-1}H_G E_p &\mbox{when $j= 1,2, \ldots $}.
\end{array}
\right.
\end{equation*}
\end{lem}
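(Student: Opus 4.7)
The plan is to build on \eqref{defYj}, which identifies the column $\sbm{Y_1 \\ Y_2 \\ \vdots}\in\ell^2_+(\BC^p)$ with $-T_G^*(T_GT_G^*)^{-1}H_GE_p$, and to handle $Y_0=I_p$ separately. Using the backward shift $S_p^*$ to realign this column with itself, the tail of the series becomes
\[
\sum_{i=1}^\iy Y_i^*Y_{i+j}=E_p^*H_G^*(T_GT_G^*)^{-1}T_G(S_p^*)^j T_G^*(T_GT_G^*)^{-1}H_GE_p,
\]
and for $j\geq 1$ the same identification gives $Y_j=-E_p^*(S_p^*)^{j-1}T_G^*(T_GT_G^*)^{-1}H_GE_p$. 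The full sum is then $Y_0^*Y_j$ plus this tail.

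For $j=0$, the central $T_GT_G^*$ cancels its inverse immediately, producing $I_p+E_p^*H_G^*(T_GT_G^*)^{-1}H_GE_p$, which is the claimed right-hand side. For $j\geq 1$, the key tool is the Toeplitz intertwining $S_p^*T_G^*=T_G^*S_m^*$ (equivalently $T_GS_p=S_mT_G$), iterated to $(S_p^*)^jT_G^*=T_G^*(S_m^*)^j$ and hence $T_G(S_p^*)^jT_G^*=T_GT_G^*(S_m^*)^j$. One factor of $T_GT_G^*$ then cancels and the tail collapses to $E_p^*H_G^*(S_m^*)^j(T_GT_G^*)^{-1}H_GE_p$, while the same intertwining recasts $Y_j$ as $-E_p^*T_G^*(S_m^*)^{j-1}(T_GT_G^*)^{-1}H_GE_p$. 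Pulling the common right factor $(S_m^*)^{j-1}(T_GT_G^*)^{-1}H_GE_p$ out of both pieces reduces the claim to the prefactor identity $E_p^*(T_G^*-H_G^*S_m^*)=G_0^*E_m^*$.

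This last identity is a direct block-structural observation. Taking adjoints it reads $T_GE_p-S_mH_GE_p=E_mG_0$, and indeed the first block column of $T_G$ is $\sbm{G_0\\G_1\\G_2\\ \vdots}$ while $S_mH_GE_p$ shifts the first block column of $H_G$ down by one to give $\sbm{0\\G_1\\G_2\\ \vdots}$, so the difference is exactly $E_mG_0$. I expect the main (and only) obstacle to be the careful bookkeeping with the two distinct shifts $S_p^*$ and $S_m^*$, the intertwiners between them, and the embeddings $E_p,E_m$; conceptually the whole argument stays at the level of bounded operators on the $\ell^2_+$-spaces, with no analytic input needed beyond the assumed strict positivity of $T_GT_G^*$.
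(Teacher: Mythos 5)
Your proposal is correct and follows essentially the same route as the paper: both decompose the sum as $Y_0^*Y_j$ plus the tail computed from \eqref{defYj}, cancel one factor of $(T_GT_G^*)^{-1}$ via the intertwining $(S_p^*)^jT_G^*=T_G^*(S_m^*)^j$, and then for $j\geq 1$ combine the two remaining terms using what amounts to the identity $E_p^*T_G^*-E_p^*H_G^*S_m^*=G_0^*E_m^*$. The only (cosmetic) difference is that you verify this last identity by inspecting the first block columns of $T_G$ and $S_mH_G$ directly, while the paper derives it from $H_GE_p=S_m^*T_GE_p$ together with $S_mS_m^*=I-E_mE_m^*$.
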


\begin{proof}[\bf Proof]
Note that
\begin{align}
&\sum_{i=0}^\iy Y_i^*Y_{i+j}=Y_0^* Y_j+ \begin{bmatrix} Y_1^* &  Y_2^*&  \cdots \end{bmatrix} \begin{bmatrix} Y_{j+1}\\    Y_{j+2}\\  \vdots \end{bmatrix}\nn\\
&\hspace{.5cm}=Y_0^* Y_j+  E_p^*H_G^*(T_GT_G^*)^{-1}T_G  (S_p^*)^j T_G^*(T_GT_G^*)^{-1}H_G E_p\nn\\
&\hspace{.5cm}=Y_0^* Y_j+  E_p^*H_G^*(T_GT_G^*)^{-1}T_G  T_G^*(S_m^*)^j  (T_GT_G^*)^{-1}H_G E_p\nn\\
&\hspace{.5cm}=Y_0^* Y_j+  E_p^*H_G^*(S_m^*)^j (T_GT_G^*)^{-1}H_G E_p, \quad j=0, 1, \ldots. \label{Y*Y}
\end{align}
Using $Y_0=I_p$ we see that with    $j=0$ the identity \eqref{Y*Y} yields the first part of the lemma.

Next assume that $j>0$. Recall that  $H_G E_p=S_m^*E_p$. Taking adjoints in the latter identity and  {using} $Y_0=I_p$ again, we see that \eqref{Y*Y} can be rewritten as
\begin{align*}
\sum_{i=0}^\iy Y_i^*Y_{i+j}&=Y_j +E_p^*T_G^*S_m(S_m^*)^j (T_GT_G^*)^{-1}H_G E_p\\
&=Y_j +E_p^*T_G^*S_mS_m^*(S_m^*)^{j -1}(T_GT_G^*)^{-1}H_G E_p\\
&=C_1-C_2,
\end{align*}
where
\begin{align*}
C_1&= E_p^*T_G^*(S_m^*)^{j -1}(T_GT_G^*)^{-1}H_G E_p\\
&= E_p^*(S_p^*)^{j -1}{T_G^*}(T_GT_G^*)^{-1}H_G E_p=-Y_j,
\end{align*}
and
\begin{align*}
C_2&=E_p^*T_G^*E_m E_m^*(S_m^*)^{j -1}(T_GT_G^*)^{-1}H_G E_p\\
&=G_0 E_m^*(S_m^*)^{j -1}(T_GT_G^*)^{-1}H_G E_p.
\end{align*}
Thus
\begin{align*}
\sum_{i=0}^\iy Y_i^*Y_{i+j}&=Y_j +C_1-C_2=-G_0 E_m^*(S_m^*)^{j -1}(T_GT_G^*)^{-1}H_G E_p.
\end{align*}
This proves the second part of the lemma.
\end{proof}

\begin{lem}\label{lemtht2}
Let $G\in H_{m\ts p}^\iy$, and assume that  $T_GT_G^*$ is strictly positive. Then
\begin{equation}
\label{eqtht0*}
\tht_0^*\Big(I_p +E_p^*H_G^*(T_GT_G^*)^{-1}H_G E_p\Big)\tht_0=I_{p-m}.
\end{equation}
\end{lem}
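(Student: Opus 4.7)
The plan is to prove \eqref{eqtht0*} by combining the defining identity of $\tht_0$ with a Schur--Woodbury manipulation of $(T_GT_G^*)^{-1}$. Throughout, let $Q:=T_GT_G^*$, set $K:=\tht_0^*\tht_0$ (invertible since $\kr\tht_0=\{0\}$), and write $N:=\tht_0^*E_p^*H_G^*Q^{-1}H_GE_p\tht_0$. Then \eqref{eqtht0*} is simply the statement $K+N=I_{p-m}$.

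The first step is to exploit the fact, already established in Lemma~\ref{L:MatRes}, that $G_0\tht_0=0$. Combined with the obvious decomposition $T_GE_p=E_mG_0+S_mH_GE_p$ of the first block column of $T_G$, this yields $T_GE_p\tht_0=S_mH_GE_p\tht_0$. Hence pre- and post-multiplying the defining identity $\tht_0\tht_0^*=I_p-E_p^*T_G^*Q^{-1}T_GE_p$ by $\tht_0^*$ and $\tht_0$ and simplifying gives
\begin{equation*}
\tht_0^*E_p^*H_G^*\,S_m^*Q^{-1}S_m\,H_GE_p\tht_0 \;=\; K-K^2.\tag{A}
\end{equation*}

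The second step is to compute $S_m^*Q^{-1}S_m$ by Schur complement, using the block decomposition $\ell_+^2(\BC^m)=E_m\BC^m\oplus S_m\ell_+^2(\BC^m)$. Routine computations give $E_m^*QE_m=G_0G_0^*$, $S_m^*QE_m=H_GE_pG_0^*$, and $S_m^*QS_m=Q+H_GE_pE_p^*H_G^*$ (the last follows from $S_m^*T_G=T_GS_p^*+H_GE_pE_p^*$, itself the adjoint of a simple splitting of $T_G$ along the first block column); the assumption $T_GT_G^*>0$ forces $T_G^*$ injective and hence $G_0G_0^*$ invertible. The Schur complement formula now yields
\[
(S_m^*Q^{-1}S_m)^{-1}\;=\;Q+H_GE_p\bigl(I_p-G_0^*(G_0G_0^*)^{-1}G_0\bigr)E_p^*H_G^*.
\]
The middle factor is the orthogonal projection onto $\kr G_0$, which by $\im\tht_0=\kr G_0$ (Lemma~\ref{L:MatRes}) equals $\tht_0 K^{-1}\tht_0^*$. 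Applying the Woodbury identity then produces
\begin{equation*}
S_m^*Q^{-1}S_m \;=\; Q^{-1}-Q^{-1}H_GE_p\tht_0\,(K+N)^{-1}\,\tht_0^*E_p^*H_G^*Q^{-1}.\tag{B}
\end{equation*}

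Substituting (B) into (A) reduces \eqref{eqtht0*} to the matrix equation $N-N(K+N)^{-1}N=K-K^2$, where $K$ is given and $L:=K+N$ is invertible since $K>0$ and $N\geq0$. Using $L-N=K$ and rearranging, this collapses to $NL^{-1}K=K(I-K)$; multiplying by $K^{-1}$ on the right gives $NL^{-1}=I-K$, i.e.\ $N=(I-K)L$, and then combining this with $L=K+N$ forces $KL=K$ and therefore $L=I_{p-m}$, which is \eqref{eqtht0*}. The main obstacle I anticipate is Step~2: selecting the block decomposition for which the associated Schur complement produces, after Woodbury, exactly the factor $(K+N)^{-1}$ that aligns with identity~(A); once this alignment is achieved the concluding algebra is essentially a one-line cancellation.
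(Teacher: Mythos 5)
Your proof is correct, and it takes a genuinely different route from the paper's. The paper proves \eqref{eqtht0*} by a direct chain of computations with the intertwining relations $H_GE_p=S_m^*T_GE_p$, $E_pE_p^*=I-S_pS_p^*$ and $S_mS_m^*=I-E_mE_m^*$: it expands $E_p^*H_G^*(T_GT_G^*)^{-1}H_GE_p\,\tht_0\tht_0^*$ as a difference of terms, cancels the leading ones, and arrives at $\tht_0\tht_0^*\big(I_p+E_p^*H_G^*(T_GT_G^*)^{-1}H_GE_p\big)\tht_0\tht_0^*=\tht_0\tht_0^*$, from which \eqref{eqtht0*} follows by injectivity of $\tht_0$ and surjectivity of $\tht_0^*$; the only input from Lemma \ref{L:MatRes} is $G_0\tht_0=0$. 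You instead compress everything into the two $(p-m)\ts(p-m)$ matrices $K=\tht_0^*\tht_0$ and $N$, extract relation (A) from the defining equation \eqref{deftht0} via $T_GE_p\tht_0=S_mH_GE_p\tht_0$, and obtain a second relation by taking the Schur complement of $Q=T_GT_G^*$ over the first block coordinate of $\ell_+^2(\BC^m)$ (all three block identities $E_m^*QE_m=G_0G_0^*$, $S_m^*QE_m=H_GE_pG_0^*$, $S_m^*QS_m=Q+H_GE_pE_p^*H_G^*$ check out, as does the Woodbury step, since $K+N>0$); this additionally uses $\im\tht_0=\kr G_0$ from Lemma \ref{L:MatRes} to recognize the projection $I_p-G_0^*(G_0G_0^*)^{-1}G_0$ as $\tht_0K^{-1}\tht_0^*$. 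The concluding algebra $N-N(K+N)^{-1}N=K-K^2\Rightarrow K+N=I_{p-m}$ is valid. What your approach buys is the structural identity $(S_m^*Q^{-1}S_m)^{-1}=Q+H_GE_pP_{\kr G_0}E_p^*H_G^*$, which is of independent interest and explains where the factor $(K+N)^{-1}$ comes from; what the paper's buys is a more elementary, self-contained computation that avoids the Schur--Woodbury machinery and the extra input $\im\tht_0=\kr G_0$. (One cosmetic remark: the invertibility of $G_0G_0^*$ is most directly seen from $G_0G_0^*=E_m^*QE_m\geq\vp I_m$, rather than from injectivity of $T_G^*$ alone.)
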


\begin{proof}[\bf Proof]
Using the definition of $\tht_0\tht_0^*$ in \eqref{deftht0} we see that
\begin{equation*}
E_p^*H_G^*(T_GT_G^*)^{-1}H_G E_p\tht_0\tht_0^*=A-B,
\end{equation*}
where
\begin{align*}
A&= E_p^*H_G^*(T_GT_G^*)^{-1}H_G E_p,\\
B&=E_p^*H_G^*(T_GT_G^*)^{-1}H_G E_pE_p^*T_G^*(T_GT_G^*)^{-1}T_GE_p\\
&=E_p^*H_G^*(T_GT_G^*)^{-1}S_m^*T_G E_pE_p^*T_G^*(T_GT_G^*)^{-1}T_GE_p.
\end{align*}
Here we used that $H_GE_p=S_m^*T_G E_p$. Next using $E_pE_p^*=I-S_pS_p^*$ we  write $B$ as $B=B_1-B_2$, where
\begin{align*}
B_1&=E_p^*H_G^*(T_GT_G^*)^{-1}S_m^*T_GT_G^*(T_GT_G^*)^{-1}T_GE_p\\
&=E_p^*H_G^*(T_GT_G^*)^{-1}S_m^*T_GE_p\\
&=E_p^*H_G^*(T_GT_G^*)^{-1}H_GE_p=A,
\end{align*}
and
\begin{align*}
B_2&= E_p^*H_G^*(T_GT_G^*)^{-1}S_m^*T_G S_pS_p^*T_G^*(T_GT_G^*)^{-1}T_GE_p\\
&= E_p^*H_G^*(T_GT_G^*)^{-1}S_m^* S_m T_GT_G^*S_m^*(T_GT_G^*)^{-1}T_GE_p\\
&= E_p^*H_G^*(T_GT_G^*)^{-1}T_GT_G^*S_m^*(T_GT_G^*)^{-1}T_GE_p\\
&= E_p^*H_G^*S_m^*(T_GT_G^*)^{-1}T_GE_p\\
&= E_p^*T_G^* S_mS_m^*(T_GT_G^*)^{-1}T_GE_p.
\end{align*}
Next we use $S_mS_m^*= I -E_mE_m^*$ to show that
\begin{align}
B_2&=E_p^*T_G^* (T_GT_G^*)^{-1}T_GE_p-E_p^*T_G^* E_mE_m^*(T_GT_G^*)^{-1}T_GE_p\nn\\
&=E_p^*T_G^* (T_GT_G^*)^{-1}T_GE_p-G_0^*E_m^*(T_GT_G^*)^{-1}T_GE_p. \label{altB2}
\end{align}
Recall (see the final part of Lemma \ref{L:MatRes}) that $\tht_0^*G_0^*=0$, and hence $\tht_0^*B_2=\tht_0^*E_p^*T_G^* (T_GT_G^*)^{-1}T_GE_p$. Since $A=B_1$ and $\tht_0\tht_0^*$ is given by \eqref{deftht0}, we conclude that
\begin{align*}
&\tht_0\tht_0^*\Big(I_p +E_p^*H_G^*(T_GT_G^*)^{-1}H_G E_p\Big)\tht_0\tht_0^*=\\
&\hspace{1cm}=\tht_0\tht_0^*\tht_0\tht^*+\tht_0\tht_0^*E_p^*H_G^*(T_GT_G^*)^{-1}H_G E_p\tht_0\tht_0^*\\
&\hspace{1cm}=\tht_0\tht_0^*\tht_0\tht^*+\tht_0\tht_0^*(A-B)\\
&\hspace{1cm}=\tht_0\tht_0^*\tht_0\tht^*+\tht_0\tht_0^*(A-B_1+B_2)\\
&\hspace{1cm}=\tht_0\tht_0^*\tht_0\tht^*+\tht_0\tht_0^*B_2 \\
&\hspace{1cm}=\tht_0\tht_0^*\tht_0\tht^*+\tht_0\tht_0^*E_p^*T_G^* (T_GT_G^*)^{-1}T_GE_p \\
&\hspace{1cm}=\tht_0\tht_0^*\Big(I_p -E_p^*T_G^* (T_GT_G^*)^{-1}T_GE_p+ E_p^*T_G^* (T_GT_G^*)^{-1}T_GE_p\Big) \\
&\hspace{1cm}= \tht_0\tht_0^*.
\end{align*}
Hence $\tht_0\tht_0^*\Big(I_p +E_p^*H_G^*(T_GT_G^*)^{-1}H_G E_p\Big)\tht_0\tht_0^*=\tht_0\tht_0^*$.  But then, using that $\tht_0^*$ is surjective and $\tht_0$ is injective, we obtain \eqref{eqtht0*}, and the lemma is proved.
\end{proof}

We proceed by showing   that (C1) and (C2) are satisfied. Let $\ga_0, \ga_1, \ga_2, \cdots$ be given by \eqref{defgas}. Using the first part of Lemma \ref{lemtht1} and formula \eqref{eqtht0*} we obtain  for each $u\in \BC^{p-m}$ that
 \begin{align*}
\|\ga_j u\|^2&
= \inn{\ga_j^* \ga_j u}{u}
=\inn{\tht_0^*\Big(\sum_{i=0}^\iy Y_i^*Y_i\Big)\tht_0 u}{u}\\
&=\inn{\tht_0^*\Big(I_p +E_p^*H_G^*(T_GT_G^*)^{-1}H_G E_p\Big)\tht_0 u}{u}\\
&=\inn{u}{u}=\|u\|^2 \quad (j=0,1,2, \cdots).
\end{align*}
Thus  (C1)  holds.

Next, in order to derive  (C2), we   use the second part of  Lemma \ref{lemtht1} and the fact that $\tht_0^*G_0^*=0$. For $j>k$ this yields
\begin{align*}
\ga_j^*\ga_k&=\tht_0^*\Big(\sum_{i=0}^\iy Y_i^* Y_{i+j-k}\Big)\tht_0\\
&=-\tht_0^*G_0^*E_m^*(S_m^*)^{j-k-1}(T_GT_G^*)^{-1}H_G E_p \tht_0=0.
\end{align*}
It follows that $\im  \ga_j \perp \im  \ga_k$  for $j>k$. Interchanging the role of $j$ and $k$ then yields (C2).

Finally, we prove $\kr T_G = \im T_{\tht}$. Recall that $G_0\tht_0=0$ by the final part of Lemma \ref{L:MatRes}. Hence using \eqref{GYG0} we have
\[
G(z)\tht(z)=G(z)Y(z)\tht_0=G_0\tht_0=0 \quad(z\in \BD).
\]
This implies $T_GT_\tht=0$, and thus $\im T_{\tht}\subset \kr T_G$. To prove the reverse inclusion,  take $f=\begin{bmatrix}f_0& f_1&f_2&   \cdots \end{bmatrix}{}^\perp$ in $\kr T_G$, and put $F(z)=E_p(I-zS_p^*)^{-1}f$. Since $G(z)F(z)=0$ on $\BD$, the second part of \eqref{invGH2} shows that
\begin{align*}
F(z)&=\begin{bmatrix}\Xi(z) &\tht(z)  \end{bmatrix}\begin{bmatrix}
G(z)\\[.2cm]  H(z) \end{bmatrix}F(z)\\
&=\begin{bmatrix}\Xi(z) &\tht(z)  \end{bmatrix}\begin{bmatrix}
0\\[.2cm]  H(z)  {F(z)} \end{bmatrix}=\tht(z) H(z)F(z) \quad (z\in \BD).
\end{align*}
It follows that  $f= {T_{\tht}}T_Hf$, and thus $f\in \im  {T_{\tht}}$ which proves that $\kr T_G\subset \im {T_{\tht}}$, and therefore $\kr T_G = \im T_{\tht}$. This completes the direct  {proof of Proposition \ref{P:Theta}.}

\section{Proof of Theorem 1.1}\label{secWiener}\setcounter{equation}{0}

In this section we prove Theorem \ref{thmmain1}.  For that purpose we first derive the following lemma.

\begin{lem}\label{lemW1} Let   $G\in \sW_+^{m\ts p}$, and assume that  $T_GT_G^*$ is strictly positive. Then $(T_GT_G^*)^{-1}$ maps $\ell_+^1(\BC^m)$ into itself.
\end{lem}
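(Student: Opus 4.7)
The plan is to reduce the statement, via Wiener-algebra spectral factorization of $GG^*$, to the inversion of an operator of the form $I-K$ where $K\ge 0$ is a positive compact perturbation of $I$ on $\ell_+^2$, and then to invert $I-K$ on $\ell_+^1$ by showing $K$ is compact on $\ell_+^1$ with spectral radius strictly less than $1$.

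First, observe that $GG^*\in\sW^{m\times m}$ and the standard Toeplitz--Hankel identity gives
\[
T_{GG^*} - T_G T_G^* \;=\; H_G H_G^*.
\]
Since Hankel operators with symbols in $\sW_+\subset C(\BT)$ are compact on $\ell_+^2$, the operators $T_GT_G^*$ and $T_{GG^*}$ share the same essential spectrum, and so strict positivity of $T_GT_G^*$ forces $GG^*(z)$ to be invertible at every $z\in\BT$. By the matrix Wiener theorem, $(GG^*)^{-1}\in\sW^{m\times m}$, and the Wiener-algebra spectral factorization of a strictly positive matrix Wiener function (see, e.g., \cite{GGK2}) produces $F\in\sW_+^{m\times m}$ with $F^{-1}\in\sW_+^{m\times m}$ and $GG^*=FF^*$. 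Setting $\wtilG:=F^{-1}G\in\sW_+^{m\times p}$ one has $\wtilG\,\wtilG^*=I_m$ on $\BT$, and the product rule for analytic Toeplitz factors yields $T_{\wtilG}=T_{F^{-1}}T_G$. Consequently $T_GT_G^*=T_F\,(T_{\wtilG}T_{\wtilG}^*)\,T_F^*$, so
\[
(T_GT_G^*)^{-1} \;=\; T_{F^{-*}}\,(T_{\wtilG}T_{\wtilG}^*)^{-1}\,T_{F^{-1}}.
\]
Both outer factors preserve $\ell_+^1$ since $F^{-1}\in\sW_+$, so the lemma reduces to showing that $(T_{\wtilG}T_{\wtilG}^*)^{-1}$ maps $\ell_+^1(\BC^m)$ into itself. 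Applying the Toeplitz--Hankel identity to $\wtilG$ gives $T_{\wtilG}T_{\wtilG}^*=T_{\wtilG\wtilG^*}-H_{\wtilG}H_{\wtilG}^*=I-K$ with $K:=H_{\wtilG}H_{\wtilG}^*\ge0$.

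The matrix entries $K_{i,j}=\sum_{k\ge 0}\wtilG_{i+k+1}\wtilG_{j+k+1}^*$ satisfy, using $\wtilG\in\sW_+$, the tail estimate
\[
\sup_{j\ge 0}\sum_{i\ge N}\|K_{i,j}\| \;\le\; \|\wtilG\|_{\sW_+}\sum_{k\ge N+1}\|\wtilG_k\| \;\longrightarrow\; 0 \quad(N\to\infty),
\]
so $K$ is the operator-norm limit in $B(\ell_+^1(\BC^m))$ of its finite-rank row-truncations and is therefore compact on $\ell_+^1(\BC^m)$. Strict positivity of $I-K$ on $\ell_+^2$ gives $\|K\|_{\ell_+^2\to\ell_+^2}<1$; any $\ell^1$-eigenvector of $K$ also lies in $\ell_+^2$, so every nonzero $\ell^1$-eigenvalue of the compact operator $K$ is an $\ell^2$-eigenvalue of $K\ge 0$ and therefore belongs to $[0,1)$. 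Compactness of $K$ on $\ell_+^1$ then forces the $\ell^1$-spectral radius of $K$ to be strictly less than $1$, so the Neumann series $\sum_{n\ge 0}K^n$ converges in $B(\ell_+^1(\BC^m))$ to $(I-K)^{-1}$. Substituting into the displayed factorization shows that $(T_GT_G^*)^{-1}$ maps $\ell_+^1(\BC^m)$ into itself.

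The two technical ingredients are the Wiener-algebra spectral factorization of the strictly positive matrix Wiener function $GG^*$, available because $GG^*(z)$ is invertible at each $z\in\BT$, and the transfer of the spectral-radius bound from $\ell^2$ to $\ell^1$, which I expect to be the main obstacle: it hinges on the compactness of $K$ on $\ell_+^1(\BC^m)$, a consequence of the Wiener decay of the Taylor coefficients of $\wtilG$, together with the observation that $\ell^1$-eigenvectors are automatically $\ell^2$-eigenvectors.
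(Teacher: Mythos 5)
Your proof is correct, and it reaches the conclusion by a genuinely different decomposition than the paper's. The paper imports the explicit inversion formula
\[
(T_GT_G^*)^{-1}= T_R^{-1}+T_R^{-1}H_G\bigl(I-H_G^*T_R^{-1}H_G\bigr)^{-1}H_G^*T_R^{-1}, \qquad R=GG^*,
\]
from \cite{FKR2a} and checks that each factor preserves $\ell_+^1$, the crux being that $I-H_G^*T_R^{-1}H_G$ is identity-plus-compact on $\ell_+^1(\BC^p)$, hence Fredholm of index zero, and injective because it is injective on $\ell_+^2$. You instead use the canonical factorization $GG^*=FF^*$ to normalize $\wtilG=F^{-1}G$ so that $T_GT_G^*=T_F(I-K)T_F^*$ with $K=H_{\wtilG}H_{\wtilG}^*$, and invert $I-K$ on $\ell_+^1(\BC^m)$ by a Neumann series after transferring the bound $\|K\|_{\ell_+^2\to\ell_+^2}<1$ to an $\ell^1$-spectral-radius bound via the eigenvalue argument. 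The underlying analytic ingredients are the same in both proofs --- Wiener spectral factorization of $GG^*$, compactness on $\ell_+^1$ of Hankel-type products coming from the summability of the Taylor coefficients, and the passage of invertibility from $\ell_+^2$ to $\ell_+^1$ for a compact perturbation of the identity --- but your route avoids citing the resolvent formula and works with a single positive compact operator on $\ell_+^1(\BC^m)$ rather than with the operator $I-H_G^*T_R^{-1}H_G$ on $\ell_+^1(\BC^p)$; the price is the extra normalization step $\wtilG\wtilG^*=I_m$ and the slightly more delicate spectral-radius transfer (which you could shortcut, as the paper does, by just invoking Fredholm index zero plus injectivity inherited from $\ell_+^2$). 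Two points you leave implicit but which are immediate: $I-K=T_{F}^{-1}(T_GT_G^*)T_{F}^{-*}$ is bounded below on $\ell_+^2$ because $T_F$ is invertible, and the adjoint outer factor $(T_{F}^*)^{-1}=T_{(F^{-1})^*}$ preserves $\ell_+^1$ because its symbol coefficients are absolutely summable.
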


\begin{proof}[\bf Proof.]
We split   the proof   into five  parts. In the first part  we review  a few general facts about Toeplitz and Hankel operators (cf., Sections 2.1--2.3 in \cite{BS90} and  Chapter XXIII in \cite{GGK2}),   and we recall  an inversion formula from \cite{FKR2a}.

\smallskip\noindent\textsc{Part 1.}
Let  $F$ belong to  the Wiener space $ \sW_+^{r\ts s}$. Then the  Toeplitz operator $T_F$ and the Hankel operator $H_F$ both map  $\ell_+^1 (\BC^s)$ (seen as   a linear sub-manifold of $\ell_+^2(\BC^s)$) into $\ell_+^1(\BC^r)$ (seen as a linear sub-manifold of $\ell_+^2 (\BC^r)$). Moreover, the induced operators acting between these $\ell_+^1$ spaces are bounded too. Furthermore, $H_F$  is compact as  an operator  from $\ell_+^2 (\BC^s)$ into $\ell_+^2(\BC^r)$ as well as   when viewed as  an operator  from $\ell_+^1 (\BC^s)$ into $\ell_+^1(\BC^r)$ (cf.,  \cite[Sections 2.1]{BS90}). Finally, if $u$ is a ${s\ts t}$ matrix, then the  functions $\va$ and $\psi$ given by
\[
\va(z)=E_r^*(I-zS_r^*)^{-1} T_FE_s u \ands \psi(z)=E_r^*(I-zS_r^*)^{-1}H_F  E_s u
\]
belong to the Wiener space $\sW_+^{r \ts t}$.

Next, we recall some facts from \cite[Section 2]{FKR2a}. Define $R=G G^*$.  See the last paragraph of the introduction for the definition of $G^*$.  Note that $R\in \sW^{m\ts m}$. The fact that $T_GT_G^*$ is strictly positive implies that the matrix $R(z) $ is positive definite  for each $z\in \BT$, and hence the Toeplitz operator  $T_R$ acting on  $\ell_+^2 (\BC^m)$ is invertible. Moreover, see  \cite[Eq. (2.4)]{FKR2a}, we have
\begin{equation}\label{TRinv1}
(T_GT_G^*)^{-1}= T_R^{-1}+T_R^{-1}H_G(I-H_G^*T_R^{-1}H_G)^{-1}H_G^*T_R^{-1}.
\end{equation}

\smallskip\noindent\textsc{Part 2.}
Since $R$ belongs to $\sW^{m\ts m}$ and   $R(z) $ is positive definite  for each $z\in \BT$, the function  $R$ admits a a canonical spectral factorization (see Corollary 2.1 in \cite[Section III.2]{CG81}), that is,  $R=R_+^*R_+$ where $R_+$ belongs to $\sW_+^{m\ts m}$ and $\det R(z)\not =0$ for each $z$ in the closed unit disc. This implies that $T_R=T_{R_+^*} T_{R_+}$ and both $T_{R_+}$ and $T_{R_+^*}$ are  invertible. In fact,  $(T_{R_+})^{-1}= T_{R_+^{-1}}$  and $(T_{R_+^*})^{-1}=T_{(R_+^{-1})^*}$ are both Toeplitz operators.   We conclude that $T_R$ is invertible  and that its inverse is given by
\[
T_R^{-1}=(T_{R_+})^{-1}(T_{R_+^* })^{-1}.
\]
From the remarks in the first paragraph of the proof it then follows that the operators $(T_{R_+})^{-1}$ and $(T_{R_+^*} )^{-1}$ map $\ell_+^1 (\BC^m)$ into itself
and act as bounded linear operators on this space. Hence the same holds true for $T_R^{-1}$. Moreover $T_R^{-1}$, as an operator on $\ell_+^1 (\BC^m)$, is again invertible.

\smallskip\noindent\textsc{Part 3.}  From the final remark in the first paragraph of the first part  of the proof we know that the Hankel operator $H_G$ maps $\ell_+^1 (\BC^p)$ into $\ell_+^1 (\BC^m)$. An analogous  result holds true for $H_G^*$. To see this note that  $H_G^*=H_{G_*}$, where $G_*$ is the function in $\sW_+^{p\ts m}$ given by:
\[
G_*(z)=G(\bar{z})^*=G_0^*+zG_0^*+z^2G_2^*+\cdots  \quad (|z|\leq 1).
\]
Using the result of  Part 1 of the proof we conclude that $I-H_G^*T_R^{-1}H_G$ maps $\ell_+^1 (\BC^p)$ into itself and act as bounded linear operator  on this space.

\smallskip\noindent\textsc{Part 4.}  Put $M=I-H_G^*T_R^{-1}H_G$.  In this part we show that $M$ is invertible as an operator on  $\ell_+^1 (\BC^p)$. To do this we use the fact that $H_G$ acts as a compact operator from  $\ell_+^1 (\BC^p)$ to   $\ell_+^1 (\BC^m)$.  {It follows} that $M$  as an operator on $\ell_+^1 (\BC^p)$ is of the form identity operator plus a compact one. Hence $M$  as an operator on $\ell_+^1 (\BC^p)$ is a Fredholm operator of index zero.  {In order to} show that $M$  as an operator on $\ell_+^1 (\BC^p)$ is  {invertible, it then suffices} to prove that $M$   on $\ell_+^1 (\BC^p)$  is one-to-one. Take $h\in \ell_+^1 (\BC^p)$, and assume that $Mh=0$. Since $\ell_+^1 (\BC^p)$ is contained in $\ell_+^2 (\BC^p)$, it follows that $h\in \ell_+^2 (\BC^p)$. But on $\ell_+^2 (\BC^p)$ the operator $M$ is invertible. Thus $h=0$, and  $M$ is one-to-one on $\ell_+^1 (\BC^p)$. Therefore $I-H_G^*T_R^{-1}H_G$  is invertible as an operator on $\ell_+^1 (\BC^p)$.

\smallskip\noindent\textsc{Part 5.} The results of the preceding parts of the proof show that the operators appearing in \eqref{TRinv1} all map $\ell^1$  spaces into $\ell^1$  spaces, and hence  $(T_GT_G^*)^{-1}$ maps $\ell_+^1(\BC^m)$ into itself.
\end{proof}

\begin{proof}[\bf Proof of Theorem \ref{thmmain1}.]
We split the proof into three parts.

\smallskip
\noindent\textsc{Part 1.}  In this part we show that   the function  $Y$ defined by \eqref{defY}  has the desired properties.  First we show that $Y$ belongs to the Wiener space $ \sW_+^{p\ts p}$. To do this note that   $T_G^*=T_{G^*}$, and hence  $T_G^*$ maps  $\ell_+^1(\BC^m)$ into $\ell_+^1(\BC^p)$. But then  Lemma~\ref{lemW1} tells us that  $T_G^*(T_GT_G^*)^{-1}$ maps  $\ell_+^1(\BC^m)$ into $\ell_+^1(\BC^p)$.  Since $G\in \sW_+^{m\ts p}$, its Taylor coefficients $G_0, G_1, G_2, \ldots$ at zero are absolutely summable in norm,  and thus   we can use \eqref{defYj}  to  show that the same holds true for  the Taylor coefficients at zero of $Y$. Therefore $Y\in \sW_+^{p\ts p}$.

 Next we show that $\det Y(z)\not =0$ when $|z|\leq 1$.  For $|z|<1$  this follows from  Proposition \ref{propY2}. We shall prove that $\det Y(z)\not= 0$ for all $z\in \BT$ by contradiction. Assume  that there exists $\l\in \BT$ such that $\det Y(\l) =0$. Then  there exists $u\not = 0$ such that $Y(\l)u = 0$.  Since $G$ and $Y$ are Wiener functions,  $G$ and $Y$ extend continuously to $\BT$.  Thus the equality in \eqref{GYG0} from Proposition \ref{propY2} also holds for each $|z|=1$. It follows that  $G_0 u = G(\l)Y(\l)u =0$.  So $u\in \kr G_0$. From the final part of Lemma \ref{L:MatRes} we know that $\kr G_0 = \im \tht_0$.  But then $u = \tht_0 v$ for some $v\in \BC^{p-m}$, and $\tht(\l)v = Y(\l)\tht_0 v = Y(\l)u = 0$.  We obtain that   $v= I_{p-m}v= \Theta^*(\l)\Theta(\l)v =0$. This implies that $u =\Theta_0v =0$, which contradicts  our assumption that  $u\not = 0$. We conclude that  $\det Y(z)\not =0$ for all $|z|\leq 1$.

 By Wiener's theorem, the fact that $ Y\in \sW_+^{p\ts p}$ and $\det Y(z)\not =0$ for all $|z|\leq 1$ implies that $Y^{-1}$ also belongs to  $\sW_+^{p\ts p}$. Finally, formula \eqref{invY}   follows from  \eqref{invY2}. Thus $Y$ has all properties mentioned in the first paragraph of Theorem \ref{thmmain1}.

\smallskip
\noindent\textsc{Part 2.}  In this part we deal with items (i)--(iii).
Note that Proposition \ref{propY2} and the final part of Lemma \ref{L:MatRes} show  that the statements in items (i) and (ii) in Theorem \ref{thmmain1} hold true,  {noting that $Y,\, Y^{-1}\in\sW_+^{p\ts p}$ implies that the functions $\Xi$, $\tht$ and $H$ are analytic Wiener functions as well}. Furthermore, item (iii) follows from Proposition \ref{P:YH} and the fact that $G$, $H$, and $Y$ extend to continuous functions on $\BT$. This proves items (i)--(iii) Theorem \ref{thmmain1}.

\smallskip
\noindent\textsc{Part 3.} It remains to prove  the statements in the final paragraph of
Theorem \ref{thmmain1}. Put $\Xi(z)=Y(z)\Xi_0$. Clearly  $\Xi \in \sW_+^{p\ts m}$.  {Using \eqref{GYG0} from Proposition \ref{propY2}}, we have
\[
G(z)\Xi(z)=G(z)Y(z)\Xi_0=G_0\Xi_0\quad  (z\in \BD).
\]
Among other things, equality \eqref{invXiTheta} shows that $G_0\Xi_0=I_m$. It follows that $\Xi$ is a solution to the  {Wiener-Bezout} problem \eqref{corona}. From  the equality \eqref{invXiTheta} it also follows that $G_0\tht_0=0$. Hence for $X$ given by \eqref{allsolW} with $V$ belonging to $\sW_+^{(p-m)\ts m}$ we have
\begin{align*}
G(z)X(z)&= G(z)Y(z)(\Xi_0+\tht_0 V(z))\\
&=G_0\Xi_0+G_0\tht_0V(z)=I_m \quad   (z\in \BD).
\end{align*}
Note that $X$ given by \eqref{allsolW} belongs to $\sW_+^{p\ts m}$, and thus all $X$ given by \eqref{allsolW} are solutions to the  {Wiener-Bezout} problem  associated with $G$.

We proceed by  proving \eqref{H2idW}. To do this let $V\in \sW_+^{(p-m)\ts m}$, and let $X$ be given by \eqref{allsolW}. From Lemma \ref{lemXi2} we know that $\Xi$  is  given by \eqref{altdefXi}. This implies that
\[
\im T_\Xi E_m=\im T_G^*( T_G T_G^*)^{-1}E_m\subset \im T_G^*=(\im T_\tht)^\perp.
\]
Thus for each $u\in \BC^m$ the vector   $T_\Xi E_mu$ is orthogonal to $\im T_\tht$
Using this orthogonality  we have
\begin{align}
\|X(\cdot)u\|^2_{H^2_p}&=\|T_X E_m u\|^2_{\ell_+^2(\BC^p)}=\|T_\Xi E_mu + T_\tht T_V E_m u\|^2_{\ell_+^2(\BC^p)}\nn \\
&=\|T_\Xi E_m u\|^2  +\|T_\tht T_V E_m u\|^2_{\ell_+^2(\BC^p)}\nn \\
&=\|T_\Xi E_m u|^2+\| T_V E_m  u\|^2_{\ell_+^2(\BC^p)}\nn \\
&= \|\Xi(\cdot)u\|^2_{H^2_p}+ \|V(\cdot)u\|^2_{H^2_p},
\label{prH2idW}
\end{align}
which proves \eqref{H2idW}.

Finally, let $X\in \sW_+^{p\ts m}$ be a solution  to the  {Wiener-Bezout} problem  associated with $G$.  Thus $G(z)X(z)=I_m$ for $z\in \BD$. Define $V(z)=H(z)X(z)$, $z\in \BD$, where $H$ is defined in
item (iii) of Theorem \ref{thmmain1}.  Then $V$ belongs to the Wiener space $W_+^{(p-m)\ts m}$,  and formula \eqref{invGH1} shows that
\begin{align*}
X(z)&=\begin{bmatrix} \Xi(z)& \tht(z)\end{bmatrix}\begin{bmatrix} G(z)\\ H(z)\end{bmatrix}X(z)\\
&=  \Xi(z)G(z)X(z)+\tht(z)H(z)X(z)=\Xi(z)+\tht(z)V(z) \quad (|z|\leq 1).
\end{align*}
Using the formulas for  $ \Xi(z)$ and  $\tht(z)$ in \eqref{defXiTheta} we see that $X$ admits the representation \eqref{allsolW}.
\end{proof}

\begin{remark}\label{remHwtH}
In the Wiener setting the function  $H$ defined in item (iii) of  Theorem \ref{thmmain1} and the function $H$ defined by \eqref{TolH} are equal. To be more precise, put
\begin{align*}
H(z)&=(\tht_0^*\tht_0)^{-1}\tht_0^*(I_p-\Xi_0G_0)  Y(z) ^{-1} \quad  (|z|\leq 1), \\
\wt{H}(\z)&=\tht^*\big(\z)(I_p-\Xi(z)G(\z)\big) \quad (|\z|=1).
\end{align*}
Then $H=\wt{H}$. To see this fix $|\z|=1$.  {According to \eqref{invXiTheta} we have
\[
H(\z)Y(\z)\mat{cc}{\tht_0&\Xi_0}=H_0 \mat{cc}{\tht_0&\Xi_0}
=\mat{cc}{I_{p-m}& 0}.
\]}
On the other hand, according  item (i) in Theorem \ref{thmmain1} we have $G(\z)Y(\z)=G_0$. Furthermore, by definition, $\Xi(\z)=Y(\z)\Xi_0$. It follows that
\begin{align*}
\wt{H}(\z)Y(\z)&=\tht^*\big(\z)(Y(\z)-\Xi(\z)G(\z)Y(z)\big) \\
&=\tht^*\big(\z)(Y(\z)-\Xi(z)G_0\big)=\tht^*(\z)Y(\z)\big(I-\Xi_0G_0\big).
\end{align*}

Again using \eqref{invXiTheta}, we obtain  $G_0\tht_0=0$ and $G_0\Xi_0=I_m$, such that
\[
(I-\Xi_0G_0)\mat{cc}{\tht_0&\Xi_0}=\mat{cc}{\tht_0&0}.
\]
This yields
\begin{align*}
\wt{H}(\z)Y(\z)\mat{cc}{\tht_0&\Xi_0}&
=\tht^*(\z)Y(\z)\mat{cc}{\tht_0&0}
=\tht^*(\z)\mat{cc}{\tht(\z)&0}\\
&=\mat{cc}{\tht(\z)&0}=\mat{cc}{I_{p-m}&0}.
\end{align*}
Since $\mat{cc}{\tht_0&\Xi_0}$ and $Y(\z)$ both are invertible, we obtain that $H(\z)=\wt{H}(\z)$.  But $\z$ is an arbitrary point on $\BT$. Therefore, $H=\wt{H}$.
\end{remark}

\section{Solutions to the  {$H^2$-Bezout} problem}
\label{HiyH2}\setcounter{equation}{0}

Let $G\in H^\iy_{m\ts p}$ and assume $T_GT_G^*$ is strictly positive. If $G$ is not in $\sW_+^{m\ts p}$, then the function $\Xi$ defined in \eqref{defXiTheta} will, in general, not be in $\sW_+^{p\ts m}$, and hence not a solution to the  {Wiener-Bezout} problem associated with $G$. However, by Propositions \ref{propY2} and \ref{P:YH},  {the function} $\Xi$ is still analytic on $\BD$ and satisfies $G(z)\Xi(z)=I_m$ for each $z\in\BD$. It turns out that r$\Xi$ is in $H^2_{p\ts m}$ and hence a solution to the  {$H^2$-Bezout} problem associated with $G$. In fact, extending the description of all solutions to the  {Wiener-Bezout} problem of Theorem \ref{thmmain1} via \eqref{allsolW} to one where $V$ is taken from $H^2_{(p-m)\ts m}$, all solutions to the  {$H^2$-Bezout} problem are obtained, even if $G\not\in\sW_+^{m\ts p}$. The details are given in the following theorem.

\begin{thm}\label{T:H2cor}
Let $G\in H^\iy_{m\ts p}$ such that $T_GT_G^*$ is strictly positive. Define the functions $\Xi$ and $\tht$ by \eqref{defXiTheta}, with $\Xi_0$ and $\tht_0$ as in \textup{(M1)} and \textup{(M2)}. Then $\Xi\in H^2_{p\ts m}$, $\tht\in H^\iy_{p\ts (m-p)}$ is inner with $\im T_\tht=\kr T_G$, and for any $V\in H^2_{(p-m)\ts m}$ the function
\begin{equation}\label{H2sol}
X(z)=\Xi(z)+\tht(z)V(z)\quad (z\in\BD)
\end{equation}
is a solution to the  {$H^2$-Bezout} problem associated with $G$. Moreover, all solutions are obtained in this way. Furthermore, for $X$ given by \eqref{H2sol}, with $V$ in $H^2_{(p-m)\ts m}$, we have
\begin{equation}\label{H2LeastSquare}
\|X(\cdot)u\|^2_{H^2_p}=\|\Xi(\cdot)u\|^2_{H^2_p}+\|V(\cdot)u\|^2_{H^2_{p-m}}
\quad (u\in\BC^m).
\end{equation}
In particular, $\Xi$ is the last square solution to the  {$H^2$-Bezout} problem associated with $G$.
\end{thm}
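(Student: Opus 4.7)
The plan is to extract this result from what is already proved in Section 2, observing that most of the work in the Wiener case of Theorem \ref{thmmain1} never actually used $G\in \sW_+^{m\ts p}$. The statement about $\tht$ being inner with $\im T_\tht=\kr T_G$ is exactly Proposition \ref{P:Theta}, so nothing new is needed there.

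For $\Xi\in H^2_{p\ts m}$, I would use the alternative formula of Lemma \ref{lemXi2},
$$\Xi(z)=E_p^*(I-zS_p^*)^{-1}T_G^*(T_GT_G^*)^{-1}E_m.$$
The Taylor coefficients of $\Xi(\cdot)u$ are precisely the successive blocks of the $\ell_+^2(\BC^p)$-vector $T_G^*(T_GT_G^*)^{-1}E_m u$, so $T_\Xi E_m u = T_G^*(T_GT_G^*)^{-1}E_m u\in \im T_G^* = (\kr T_G)^\perp = (\im T_\tht)^\perp$. This gives both $\|\Xi(\cdot)u\|^2_{H^2_p}<\infty$ and the orthogonality that will drive the Pythagorean identity. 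That $X=\Xi+\tht V$ lies in $H^2_{p\ts m}$ and satisfies $GX=I_m$ is immediate from $\tht\in H^\iy$, the identity $G(z)Y(z)=G_0$ of Proposition \ref{propY2}, and the relations $G_0\Xi_0=I_m$, $G_0\tht_0=0$ of Lemma \ref{L:MatRes}. Once these are in place, the Pythagorean identity and the least-squares conclusion follow exactly as in Part~3 of the proof of Theorem \ref{thmmain1}: write $T_X E_m u = T_\Xi E_m u + T_\tht T_V E_m u$, invoke the orthogonality above, and use that $T_\tht$ is an isometry because $\tht$ is inner.

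The main obstacle, and the only place where the Wiener proof really does not suffice, is showing that every $H^2$ solution has the form \eqref{H2sol}. Here the strategy is as follows. Given $X\in H^2_{p\ts m}$ with $G(z)X(z)=I_m$ on $\BD$, fix $u\in\BC^m$ and identify $(X-\Xi)(\cdot)u\in H^2_p$ with its Taylor coefficient sequence in $\ell_+^2(\BC^p)$. Matching Taylor coefficients in the product, the pointwise identity $G(z)(X-\Xi)(z)u=0$ on $\BD$ is equivalent to the operator identity $T_G\bigl((X-\Xi)(\cdot)u\bigr)=0$, so $(X-\Xi)(\cdot)u\in\kr T_G=\im T_\tht$ by Proposition \ref{P:Theta}. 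Since $T_\tht$ is an isometry, there exists a unique $g_u\in\ell_+^2(\BC^{p-m})$ with $T_\tht g_u=(X-\Xi)(\cdot)u$, and $g_u$ depends linearly on $u$. Taking $V\in H^2_{(p-m)\ts m}$ to be the matrix function whose $u$-column has $g_u$ as its Taylor coefficient sequence then yields $X=\Xi+\tht V$, completing the proof.
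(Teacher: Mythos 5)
Your proposal is correct and follows essentially the same route as the paper: the paper likewise reads off $\ga_\Xi=T_G^*(T_GT_G^*)^{-1}E_m$ from Lemma \ref{lemXi2}, invokes Proposition \ref{P:Theta} for $\tht$, and obtains the converse from the orthogonal decomposition $I=T_G^*(T_GT_G^*)^{-1}T_G+T_\tht T_\tht^*$ applied to the coefficient column of a solution, which is your subtract-and-lift argument written as a projection identity. The only cosmetic difference is that the paper deduces $\Xi\in H^2_{p\ts m}$ from $Y\in H^2_{p\ts p}$ (Proposition \ref{P:YY-1H2}) rather than directly from the boundedness of $\ga_\Xi$.
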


Theorem \ref{T:H2cor} gives a variation on the last part of our main result, Theorem \ref{thmmain1} above, under the weaker assumption $G\in H^\iy_{m\ts p}$. Variations on the other claims made in Theorem \ref{thmmain1}, e.g., \eqref{invY} and items (i)--(iii), under this weaker assumption were proved in Propositions \ref{propY2}, \ref{P:YH} and \ref{P:Theta} above.

We shall first prove the next proposition, which contains the key observation needed in the proof of Theorem \ref{T:H2cor}.

\begin{prop}\label{P:YY-1H2}
Let $G\in H^\iy_{m\ts p}$ such that $T_GT_G^*$ is strictly positive. Then the function $Y$ defined by \eqref{defY} as well as the function $Y(\cdot )^{-1}$ are in $H^2_{p\ts p}$. In particular, $\det Y(z)\not =0$ for almost every $z\in\BT$.
\end{prop}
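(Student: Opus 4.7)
The plan is to verify both $Y,\,Y^{-1}\in H^2_{p\ts p}$ by bounding the $\ell^2$-sum of their Taylor coefficients column-by-column; the a.e.\ non-vanishing of $\det Y$ on $\BT$ will then follow from standard non-tangential boundary limits for $H^2$ functions. For $Y$ the argument is immediate: since $G\in H^\iy_{m\ts p}$, the Hankel operator $H_G$ is bounded on the relevant $\ell_+^2$-spaces by Nehari's theorem, and combined with the boundedness of $T_G^*$ and of $(T_GT_G^*)^{-1}$ (the latter from strict positivity of $T_GT_G^*$), the identity \eqref{defYj} exhibits $\col(Y_1,Y_2,\ldots)$ as a bounded operator from $\BC^p$ into $\ell_+^2(\BC^p)$. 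This gives $\sum_{j\geq 1}\|Y_ju\|^2<\iy$ for every $u\in\BC^p$, so every column of $Y$ lies in $H^2_p$ and hence $Y\in H^2_{p\ts p}$.

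For $Y^{-1}$ I would expand the inversion formula \eqref{invY2} to read off $W_0=I_p$ and $W_j=E_p^*T_G^*(T_GT_G^*)^{-1}H_GS_p^{j-1}E_p$ for $j\geq 1$. Introducing the bounded operator
\[
\Phi:=E_p^*T_G^*(T_GT_G^*)^{-1}H_G\colon\ell_+^2(\BC^p)\to\BC^p,
\]
so that $W_{j+1}=\Phi S_p^jE_p$, the next step is to compute
\[
\sum_{j\geq 1}W_j^*W_j=\sum_{j\geq 0}E_p^*(S_p^*)^j(\Phi^*\Phi)S_p^jE_p,
\]
which is the sum of the $p\ts p$ diagonal blocks of the positive operator $\Phi^*\Phi$ on $\ell_+^2(\BC^p)$. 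The key observation is that $\Phi$ factors through $\BC^p$, so $\Phi^*\Phi$ has rank at most $p$ and is trace class, with $\textup{tr}(\Phi^*\Phi)=\textup{tr}(\Phi\Phi^*)<\iy$. Using the resolution of identity $\sum_{j\geq 0}S_p^jE_pE_p^*(S_p^*)^j=I$ together with cyclicity of the trace, this trace equals $\sum_{j\geq 0}\textup{tr}\bigl(E_p^*(S_p^*)^j\Phi^*\Phi S_p^jE_p\bigr)$, and since the blocks are positive semidefinite the series converges in trace norm to a bounded positive operator on $\BC^p$. Consequently $\sum_{j\geq 1}\|W_ju\|^2\leq\textup{tr}(\Phi\Phi^*)\,\|u\|^2<\iy$ for every $u\in\BC^p$, giving $Y^{-1}\in H^2_{p\ts p}$.

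Having $Y,\,Y^{-1}\in H^2_{p\ts p}$, both admit non-tangential boundary values a.e.\ on $\BT$; letting $r\uparrow 1$ in $Y(re^{it})Y^{-1}(re^{it})=I_p$ then gives $Y(e^{it})Y^{-1}(e^{it})=I_p$ a.e., whence $\det Y(e^{it})\neq 0$ a.e. I expect the main obstacle to be the $Y^{-1}$ step: a naive operator-norm bound yields only $\|W_j\|\lesssim 1$, far from square-summable, so one must exploit the projection $E_p^*$ sitting in front of $T_G^*(T_GT_G^*)^{-1}H_G$---which cuts that composition down to the finite-rank operator $\Phi$---and deploy the trace-class argument above to recover square-summability of the diagonal-block series.
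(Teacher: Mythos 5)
Your proof is correct. For $Y$ itself your argument is exactly the paper's: the column operator $\col(Y_1,Y_2,\dots)=-T_G^*(T_GT_G^*)^{-1}H_GE_p$ from \eqref{defYj} is bounded from $\BC^p$ into $\ell^2_+(\BC^p)$, and that is all that is needed. For $Y(\cdot)^{-1}$ you take a genuinely different route. The paper introduces the auxiliary function $F(z)=I_p+zE_p^*(I-zS_p^*)^{-1}H_G^*(T_GT_G^*)^{-1}T_GE_p$, whose coefficient column operator is the manifestly bounded $H_G^*(T_GT_G^*)^{-1}T_GE_p:\BC^p\to\ell^2_+(\BC^p)$, so $F\in H^2_{p\ts p}$; since by \eqref{invY2} one has $F_*=Y(\cdot)^{-1}$ and membership in $H^2$ (an entrywise square-summability condition) survives the passage $F\mapsto F_*$, the conclusion follows. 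You instead estimate $\sum_{j\ge 1}\|W_ju\|^2$ directly, using that $\Phi=E_p^*T_G^*(T_GT_G^*)^{-1}H_G$ has range in the finite-dimensional space $\BC^p$ and is therefore Hilbert--Schmidt with $\textup{tr}(\Phi^*\Phi)=\textup{tr}(\Phi\Phi^*)<\iy$, while the vectors $S_p^jE_pu$ are mutually orthogonal; your trace computation is correct and yields the explicit bound $\sum_{j\ge1}\|W_ju\|^2\le\textup{tr}(\Phi\Phi^*)\,\|u\|^2$. Both arguments rest on the same finite-rank phenomenon --- you are in effect verifying by hand that $\ga_{F_*}$ is bounded whenever the coefficient sequence of $F$ has finite Hilbert--Schmidt content --- but yours buys a quantitative bound on the norm of $\ga_{Y^{-1}}$, whereas the paper's transposition trick avoids any trace computation. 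The final boundary-value step coincides with the paper's. One cosmetic point: boundedness of $H_G$ for $G\in H^\iy_{m\ts p}$ is the elementary direction ($\|H_G\|\le\|G\|_\iy$); Nehari's theorem is the converse and is not needed here.
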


In order to prove Proposition \ref{P:YY-1H2} we require some additional notation. Let $F\in H_{r\ts s}^2$. Then $F$ admits a Taylor expansion
\begin{equation}\label{F-Taylor}
F(z)=F_0+zF_1+z^2F_2+\cdots  \quad (z\in\BD)
\end{equation}
and induces a bounded operator
\begin{equation*}
\ga_F=\mat{c}{F_0\\F_1\\\vdots}:\BC^s\to\ell^2_+(\BC^r)\ \ \mbox{with}\ \
\|\ga_Fu\|_{\ell^2_+(\BC^r)}=\|F(\cdot)u\|_{H^2_r}\ \ (u\in\BC^s).
\end{equation*}
In fact, an analytic $r\ts s$ matrix function $F$ as in \eqref{F-Taylor} is in $H_{r\ts s}^2$ if and only if $\ga_F$ above induces a bounded operator from $\BC^s$ into $\ell^2_+(\BC^r)$. On the other hand, if $K$ is a bounded operator from $\BC^s$ into $\ell^2_+(\BC^r)$, then $K=\ga_F$ for some $F\in H_{r\ts s}^2$; in this case $F_n:=E_r^* S_r^{*n}K$ is the $n$-th Taylor coefficient of $F$.

With $F\in H_{r\ts s}^2$ we associate a function ${F_*}$ defined by
\begin{equation}
\label{defF*}
F_*(z)=F(\bar{z})^*=F_0^*+zF_1^*+z^2F_2^*+\cdots  \quad (|z|<1).
\end{equation}
Then $F_*\in H^2_{s\ts r}$. However, $\ga_F$ and $\ga_{F_*}$ need not have the same operator norm. In contrast, for $F$ in $H_{r\ts s}^\iy$, we have ${F_*}\in H_{s\ts r}^\iy$ and $\|T_F\|=\|F\|_\iy=\|{F_*}\|_\iy=\|T_{\wtil{F}}\|$.

\begin{proof}[\bf Proof  of Proposition \ref{P:YY-1H2}]
Identifying $\ell^2_+(\BC^p)$ with $\BC^p\oplus \ell^2_+(\BC^p)$, we see that
\[
\ga_Y=\mat{c}{I_p\\
-T_G^*(T_GT_G^*)^{-1}H_GE_p}:\BC^p\to \mat{c}{\BC^p\\ \ell^2_+(\BC^p)},
\]
which is clearly bounded as an operator from $\BC^p$ into $\ell^2_+(\BC^p)$. Hence $Y\in H^2_{p\ts p}$.

Now define $F$ on $\BD$ by
\[
F(z)=I+zE_p^*(I-zS_p^*)^{-1}H_G^*(T_GT_G^*)^{-1}T_GE_p\quad (z\in\BD).
\]
Again identifying $\ell^2_+(\BC^p)$ with $\BC^p\oplus \ell^2_+(\BC^p)$, this in turn shows that
\[
\ga_{F}=\mat{c}{I_p\\ H_G^*(T_GT_G^*)^{-1}T_GE_p} :\BC^p\to \mat{c}{\BC^p\\ \ell^2_+(\BC^p)}.
\]
Hence $\ga_{F}$ is bounded, and thus $F$ is in $H^2_{p\ts p}$. Moreover, by \eqref{invY2}, we have
\[
F_*(z)=I+zE_p^*T_G^*(T_GT_G^*)^{-1}H_G(I-zS_p)^{-1}E_p=Y(z)^{-1}\quad (z\in\BD).
\]
Since $F\in H^2_{p\ts p}$, we obtain that $F_*\in H^2_{p\ts p}$.
Hence $Y(\cdot )^{-1}$ is in $H^2_{p\ts p}$.

Since both $Y$ and $F$ are in $H^2_{p\ts p}$ with $Y(z)F(z)=I_p$ for every $z\in\BD$, the non-tangential limits of $F$ and $Y$ exist a.e.\ on $\BT$ and, by continuity, the identity $Y(z)F(z)=I_p$ extends to all points on $\BT$ where the non-tangential limits of both exist. Thus $Y(z)$ is invertible for almost every $z\in\BT$.
\end{proof}

\begin{proof}[\bf Proof of Theorem \ref{T:H2cor}.]
Since $ Y(\cdot)^{-1}$ is in $ H^2_{p\ts p}$ and $\Xi(z)=Y(z)\Xi_0$, $z\in\BD$, we have $\Xi\in H^2_{p\ts m}$.  The claim regarding $\tht$ follows from Proposition \ref{P:Theta}. Let $V\in  H^2_{(p-m)\ts m}$. The preceding observations about $\Xi$ and $\tht$ show that the function $X$ given by \eqref{H2sol} is in $H^2_{p\ts m}$. By \eqref{invGH2} we have $G(z)\Xi(z)=I_m$ and $G(z)\tht(z)=0$, so that $G(z)X(z)=I_m$ for each $z\in\BD$. Hence for each $V\in  H^2_{(p-m)\ts m}$, the function $X$ given by \eqref{H2sol} is a solution to the  {$H^2$-Bezout} problem associated with $G$.

Next we  show that all solutions to the  {$H^2$-Bezout} problem associated with $G$ are obtained through \eqref{H2sol}.   To do this, we first note that   \eqref{altdefXi} implies that   $\ga_\Xi=T_G^*(T_GT_G^*)^{-1}E_m$.

Now let $X\in H^2_{p\ts m}$ be a solution to \eqref{corona}. Then $\ga_{X}$ is bounded and \eqref{corona} translates to $T_G \ga_{X}=E_m$.  We thus obtain  that
\[
T_G^*(T_GT_G^*)^{-1}T_G \ga_{X}=T_G^*(T_GT_G^*)^{-1}E_m=\ga_\Xi.
\]
Note that $T_G^*(T_GT_G^*)^{-1}T_G=I-P_{\kr T_G}=I-T_\tht T_\tht^*$. Hence
\[
\ga_{X}=T_G^*(T_GT_G^*)^{-1}T_G\ga_{X} + T_\tht T_\tht^* \ga_{X}
=\ga_\Xi +T_\tht \ga_V,
\]
where $V\in H^2_{(p-m)\ts m}$ is determined by $\ga_V=T_\tht^* \ga_{X}$. The above identity implies $X$ is given by \eqref{H2sol} with $V\in H^2_{(p-m)\ts m}$ such that $\ga_V=T_\tht^* \ga_{X}$.

It remains to derive the identity  \eqref{H2LeastSquare}.  But this can be done  by using the same argumentation as in the proof of Theorem \ref{thmmain1} (see \eqref{prH2idW}); we omit the details.
\end{proof}

Let $G\in H^\iy_{m\ts p}$ be such that $T_GT_G^*$ is strictly positive. Then $Y\in H^2_{p\ts p}$, by Proposition \ref{P:YY-1H2}. Now assume $Y\in H^\iy_{p\ts p}$. In that case all solutions to the $H^\iy$-corona problem  associated with $G$ are given by formula  \eqref{allsolW}. More precisely we have the following proposition.

\begin{prop}\label{propHinf}
Let $G\in H^\iy_{m\ts p}$ be such that $T_GT_G^*$ is strictly positive, and assume that the function  $Y$  defined by \eqref{defY} belongs to $Y\in H^\iy_{p\ts p}$. Then $Y$ is invertible outer, and all  solutions to the $H^\iy$-corona problem are given by \eqref{allsolW} where the free parameter is any $V\in H^\iy_{(p-m)\ts m}$.
\end{prop}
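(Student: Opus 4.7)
My plan is to establish the proposition in two stages. First, I will show that the hypothesis $Y\in H^\iy_{p\ts p}$ forces $Y^{-1}\in H^\iy_{p\ts p}$, so that $Y$ is invertible outer. Once this is in hand, the parametrization of all $H^\iy$ solutions will be read off from Theorem \ref{T:H2cor} by routing its $H^2$-parameter through $Y^{-1}$.

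For the first stage, set $\Psi=\begin{bmatrix}\Xi & \tht\end{bmatrix}$, a $p\ts p$ matrix function equal to $Y\begin{bmatrix}\Xi_0 & \tht_0\end{bmatrix}$. Under the hypothesis $Y\in H^\iy$, both $\Xi=Y\Xi_0$ and $\tht=Y\tht_0$ lie in $H^\iy$, so $\Psi\in H^\iy_{p\ts p}$ and the analytic Toeplitz operator $T_\Psi$ on $\ell^2_+(\BC^p)$ is bounded. Via the identification $\ell^2_+(\BC^p)=\ell^2_+(\BC^m)\oplus\ell^2_+(\BC^{p-m})$ we have $T_\Psi=\begin{bmatrix}T_\Xi & T_\tht\end{bmatrix}$. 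To check injectivity, I would apply $T_G$ to an equation $T_\Xi f+T_\tht g=0$; since $G\Xi=I_m$ and $G\tht=0$ by \eqref{invGH2}, this forces $f=0$, and then $g=0$ follows because $T_\tht$ is an isometry (Proposition \ref{P:Theta}). For surjectivity, given $h\in\ell^2_+(\BC^p)$ I would set $f=T_G h$; then $T_G(h-T_\Xi f)=0$, so $h-T_\Xi f$ lies in $\kr T_G=\im T_\tht$ by Proposition \ref{P:Theta}, yielding $g$ with $h-T_\Xi f=T_\tht g$. The open mapping theorem then gives invertibility of $T_\Psi$, and by the standard fact that bounded operators on $\ell^2_+(\BC^p)$ commuting with the block forward shift are analytic Toeplitz operators, $T_\Psi^{-1}$ is itself analytic Toeplitz, so $\Psi^{-1}\in H^\iy_{p\ts p}$. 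Since $\begin{bmatrix}\Xi_0 & \tht_0\end{bmatrix}$ is a constant invertible matrix by Lemma \ref{L:MatRes}, this yields $Y^{-1}=\begin{bmatrix}\Xi_0 & \tht_0\end{bmatrix}\Psi^{-1}\in H^\iy_{p\ts p}$, proving $Y$ is invertible outer.

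For the second stage, let $X\in H^\iy_{p\ts m}$ be any solution of $GX=I_m$. Since $X\in H^2_{p\ts m}$, Theorem \ref{T:H2cor} supplies a unique $V\in H^2_{(p-m)\ts m}$ with $X=Y(\Xi_0+\tht_0 V)$. Using $H_0\Xi_0=0$ and $H_0\tht_0=I_{p-m}$ from Lemma \ref{L:MatRes}, this rearranges to $V=H_0Y^{-1}X$, and since both $Y^{-1}$ and $X$ belong to $H^\iy$, I conclude $V\in H^\iy_{(p-m)\ts m}$. Conversely, for any $V\in H^\iy_{(p-m)\ts m}$, the formula \eqref{allsolW} exhibits $X$ as a product of $H^\iy$ factors, so $X\in H^\iy_{p\ts m}$ and it solves the corona problem by Theorem \ref{T:H2cor}. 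The main obstacle in this plan is the first stage: Proposition \ref{P:YY-1H2} only supplies $Y^{-1}\in H^2$, and upgrading to $H^\iy$-membership must exploit extra structure; routing through invertibility of $T_\Psi$ sidesteps direct boundary-value estimates on $Y^{-1}$ and uses in an essential way the identity $\im T_\tht=\kr T_G$ together with the isometric property of $T_\tht$, both supplied by Proposition \ref{P:Theta}.
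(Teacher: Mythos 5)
Your proposal is correct, but both of its stages take routes genuinely different from the paper's. For the invertible-outer claim, the paper extends Remark \ref{remHwtH} to the $H^\iy$ setting to get the boundary formula $H(\z)=\tht(\z)^*(I-\Xi(\z)G(\z))$ a.e.\ on $\BT$, concludes $H\in H^\iy_{(p-m)\ts p}$ since $\tht$, $\Xi$, $G$ are bounded, and then reads off $Y^{-1}=\sbm{G_0\\ H_0}^{-1}\sbm{G\\ H}\in H^\iy_{p\ts p}$ from the invertibility of the constant matrix $\sbm{G_0\\ H_0}$. You instead prove that the analytic Toeplitz operator $T_{[\Xi\ \tht]}$ is boundedly invertible (injectivity from $T_GT_\Xi=I$, $T_GT_\tht=0$ and the isometric property of $T_\tht$; surjectivity from $\kr T_G=\im T_\tht$) and invoke the shift-commutant characterization of analytic Toeplitz operators to conclude $[\Xi\ \tht]^{-1}\in H^\iy_{p\ts p}$; this is a sound argument, entirely within the paper's toolkit (the commutant fact is the same one cited from \cite[Section XXIII.3]{GGK2} in the direct proof of Proposition \ref{P:Theta}), and it has the merit of bypassing the Tolokonnikov-type formula for $H$ altogether. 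For the completeness of the parametrization, the paper bounds $\|V(\z)\|=\|\tht(\z)V(\z)\|=\|X(\z)-\Xi(\z)\|\le\|X\|_\iy+\|\Xi\|_\iy$ a.e., using only that $\tht$ has isometric boundary values — notably this does not require knowing $Y^{-1}\in H^\iy$ — whereas you extract $V=H_0Y^{-1}X$ and lean on the first stage; your route makes the "all solutions" claim logically dependent on the outer property, while the paper's keeps the two independent, but both are valid and your extraction of $V$ via $H_0\Xi_0=0$, $H_0\tht_0=I_{p-m}$ is clean.
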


\begin{proof}[\bf Proof]
Assume the function $Y$ defined by \eqref{defY} belongs to $Y\in H^\iy_{p\ts p}$. Then $\Xi$ defined in \eqref{defXiTheta} is in $H^\infty_{p\times p-m}$, and hence $\Xi$ is a solution to the $H^\iy$-corona problem associated with $G$. Since the inner function $\tht$ is in $H^\infty_{p\times p-m}$, we obtain that $X$ defined by \eqref{allsolW} is in $H^\iy_{p\ts m}$ whenever the parameter $V$ is in $H^\iy_{(p-m)\ts m}$. Hence the map $V\mapsto X$ in \eqref{allsolW} produces solutions to the $H^\iy$-corona problem when restricted to parameters $V\in H^\iy_{(p-m)\ts m}$.

Next we show that all solutions to the $H^\iy$-corona problem associated with $G$ are obtained by \eqref{H2sol} when the parameters $V$ are restricted to $H^\iy_{(p-m)\ts m}$. Assume $X\in H^\iy_{p\ts m}$ satisfies \eqref{corona}. Then $X$ is also a $H^2$-solution of \eqref{corona}, and hence $X$ is given by \eqref{H2sol} for some $V\in H^2_{(p-m)\ts m}$. It remains to show that $V\in H^\iy_{(p-m)\ts m}$. The latter follows by considering the values of $V$ in $\BT$, and noting that for almost every $\z\in\BT$ we have
\begin{align*}
\|V(\z)\|=\|\tht(\z)V(\z)\|&=\|X(\z)-\Xi(\z)\|\leq \\
& \leq\|X(\z)\|+\|\Xi(\z)\|\leq \|X\|_\iy+\|\Xi\|_\iy.
\end{align*}
Hence $\|V\|_\iy\leq \|X\|_\iy+\|\Xi\|_\iy<\infty$, and thus $V\in H^\iy_{(p-m)\ts m}$.

We conclude with the proof that $Y\in H^\iy_{p\ts p}$ implies that the function $Y(\cdot)^{-1}$ also belongs to  $H^\iy_{p\ts p}$, i.e., that $Y$ is invertible outer. To see that this is the case, recall from Section \ref{secWiener}, that the function $H$ defined by \eqref{defH}, on the circle is given by
\[
H(z)=\tht(z)^*(I-\Xi(z)G(z))\quad (\mbox{a.e. }z\in\BT).
\]
(Note that this observation does not require $G\in\sW_{m\ts p}^+$.) Since $\tht$, $\Xi$ and $G$ are all $H^\iy$-functions, it follows that $H$ is essentially bounded on $\BT$, and thus $H\in H^\iy_{(p-m) \ts p}$. This implies that the function
\[
z\mapsto \mat{c}{G(z)\\ H(z)}=\mat{c}{G_0\\ H_0}Y(z)^{-1}
\]
is in $H^{\iy}_{p\ts p}$. By the invertibility of $\sbm{G_0\\ H_0}$, it follows that $Y^{-1}$ is in $H^\iy_{p\ts p}$, as claimed.
\end{proof}

Note that the description of the solutions to the $H^\iy$-corona problem associated with $G$ obtained in this way is much simpler than the one obtained in \cite[Remark 4.1]{FtHK-8}. However, while the description \eqref{H2sol} has a favorable behavior with respect to the $H^2$-norm (see \eqref{H2LeastSquare}), there is no clear connection between the supremum norms of the parameter $V$ and the solution $X$ related through \eqref{H2sol}, making it a less suitable way to describe solutions with an additional bound on the supremum norm.

Furthermore, at this stage it is unknown whether the function $Y$ in \eqref{defY} belongs to $H^\iy_{p\ts p}$, or not. Can it happen that $Y$ does not belong to $H^\iy_{p\ts p}$, and if so, under what conditions on $G$ does $Y$ belong to $H^\iy_{p\ts p}$?  Theorem \ref{thmmain1} yields that $Y\in H^\iy_{p\ts p}$ whenever $G\in \sW_+^{m\ts p}$.

\section{Concluding remarks}
\setcounter{equation}{0}

\begin{remark}\label{R:Gconst}
Let us assume  that the $m\ts p$ matrix function $G$  is a constant function, that is, $G(z)=G_0$ for all $z\in \BD$. In that case the Bezout-corona equation \eqref{corona} reduces to
\begin{equation}\label{constG1}
G_0X(z)=I_m.
\end{equation}
This equation has a solution if and only if  the $m\ts p$ matrix $G_0$ is right invertible, and in that case   a straightforward application of Theorem \ref{thmmain1} shows that all Wiener class solutions of equation  \eqref{constG1}
are given by
\begin{equation}\label{solconstG1}
X(z)=G_0^*(G_0 G_0^*)^{-1}+\t_0 V(z), \qquad  V\in \sW_+^{(p-m)\ts m}.
\end{equation}
Here $\t_0$ is an isometry mapping $\BC^{p-m}$ onto $\kr G_0$. This result can also be derived directly by elementary linear algebra, using that  $G_0^*(G_0 G_0^*)^{-1}$ is the Moore-Penrose right inverse of $G_0$. Note the definition of $\t_0$ implies that $\t_0 \t_0^*$ is the orthogonal  projection of  $\BC^p$ onto $\kr G_0$, and hence (cf., \eqref{deftht0})  we have
\[
\t_0 \t_0^*=I_p-G_0^*(G_0 G_0^*)^{-1}G_0 \ands \kr \t_0 =\{0\}.
\]
Finally, in this particular case the function defined by \eqref{defY} is just identically equal to the $p\ts p$ identity matrix.
\end{remark}

\begin{remark}\label{R:allsol2} Let $G\in \sW_+^{m\ts p}$, and assume that  $T_GT_G^*$ is strictly positive. Let $Y\in  \sW_+^{p\ts p}$ be given by \eqref{defY}. Then $\det Y(z)\not =0$ and $G(z)=G_0 Y(z)^{-1}$ for each $|z|\leq 1$. Hence equation \eqref{corona} can be rewritten as
\[
G_0\Big(Y(z)^{-1}X(z)\Big)=I_m.
\]
But then we can apply the result of the previous remark to show that the set of all Wiener solutions to  the Wiener-Bezout problem defined by $G$ is given by
\[
X(z)=Y(z)\Big(G_0^*(G_0 G_0^*)^{-1}+\t_0 V(z)\Big),  \qquad  V\in \sW_+^{(p-m)\ts m}.
\]
The above representation of the set of all Wiener solutions differs from and is less informative than the  one given by \eqref{allsolW}.  For instance, in general, the function $Y(\cdot) G_0^*(G_0 G_0^*)^{-1}$ is not the least {squares solution.} 
\end{remark}

\begin{remark}\label{R:Yinv2a} If $G$ is a polynomial, then the function $Y(\cdot)^{-1}$ is also a polynomial and its degree is less than or equal to the degree of $G$. This fact  is a corollary of  formula \eqref{invY} in Theorem \ref{thmmain1}. However, in general,  the assumption $G$ is a polynomial does not imply that $Y$ is a polynomial.  To see this we take $G(z)=\begin{bmatrix} 1+z & -z \end{bmatrix}$ as in Example 1 in \cite[Section 5]{FKR2a} and show that for this specific choice of $G$ the function $Y$ is given by
\begin{align}\label{Yex}
Y(z)={\frac{1}{1+zq}\mat{cc}{1-(1-q)z &  z  \\ -(1-q)z  & 1+z}}.
\end{align}
Here $q=\half(3-\sqrt{5})\in(0,1/2)$, which satisfies
\begin{align}\label{qids}
q^2-3q+1=0,\quad \sqrt{q}=1-q \ands \frac{q(1-q)}{1-2q}=1.
\end{align}
We compute \eqref{Yex} via the formula for the Taylor coefficients of $Y$ given in \eqref{defYj}. For this purpose we rewrite the right hand side in the second identity of \eqref{defYj} as $-T_G^*(T_GT_G^*)^{-1}H_G E_2$, and we compute this operator following the approach of \cite{FKR2a}.  Recall (see \cite[Eq. (2.4)]{FKR2a} or \eqref{TRinv1}) that
\[
(T_GT_G^*)^{-1}= T_R^{-1}+T_R^{-1}H_G(I-H_G^*T_R^{-1}H_G)^{-1}H_G^*T_R^{-1},
\]
where $R=GG^*$. In the present example, where $G(z)=\begin{bmatrix} 1+z & -z \end{bmatrix}$, we have $R(z)=G(z)G(1/\bar{z})^*=3+z+z^{-1}$, and
hence $R(z)$ is strictly positive on $\BT$. If follows that  $R$ admits an outer spectral factorization, namely
\[
R(z)=\phi(1/\bar{z})^*\phi(z), \ \mbox{with $\phi(z)=q^{-\half}(1+zq)$ and $q=\half(3-\sqrt{5})$}.
\]
Furthermore,  $R(z)^{-1}=\psi(z)\psi(1/\bar{z})^*$ with
\[
\psi(z)=\sqrt{q}(1+zq)^{-1}=\sum_{\nu=0}^\infty \sqrt{q}(-q)^\nu z^\nu, \quad (z\in\BD).
\]
It follows that $(T_R)^{-1}=T_\psi T_\psi^*$, and
\begin{align*}
T_\psi=\sqrt{q}\mat{cccc}{v &Sv&S^2v&\cdots}\ \mbox{with}\  v=\mat{ccccc}{1&-q&(-q)^2&(-q)^3&\cdots}^*.
\end{align*}
Here $S$ denotes the forward shift operator on $\ell^2_+:=\ell^2_+(\BC)$. Since $G(z)=\begin{bmatrix} 1+z & -z \end{bmatrix}$, we have
\begin{align*}
T_G=\mat{ccccccc}{1&0&0&0&0&0&\cdots\\ 1&-1&1&0&0&0&\cdots\\ 0&0&1&-1&1&0&\cdots\\ \vdots&\vdots&\vdots&\vdots&\vdots&\vdots&},\quad
H_G=\mat{ccccc}{1&-1&0&0&\cdots\\ 0&0&0&0&\cdots\\ 0&0&0&0&\cdots\\ \vdots&\vdots&\vdots&\vdots&}.
\end{align*}
We then obtain
\[
T_R^{-1}H_G=T_\psi T_\psi^* H_G=\sqrt{q} T_\psi H_G= q\mat{cccccc}{v&-v&0&0&0&\cdots}.
\]
Identifying $\ell^2_+$ with $\BC^2\oplus\ell^2_+$, we get
\small{
\[
(I-H_G^* T_R^{-1} H_G)^{-1}
=\mat{ccc}{1-q & q &0\\ q & 1-q &0 \\  0& 0& I_{\ell^2_+}}^{-1}
=\mat{ccc}{1/q & -1/\sqrt{q} &0\\ -1/\sqrt{q} & 1/q &0 \\  0& 0& I_{\ell^2_+}}.
\]
}
Putting the above computations together yields
\begin{align*}
(T_GT_G^*)^{-1}
&=T_R^{-1}+q^2\mat{cc}{v & -v}\mat{cc}{1/q & -1/\sqrt{q}\\ -1/\sqrt{q} & 1/q} \mat{c}{v^*\\ -v^*}\\
&=T_{R}^{-1}+v \mat{cc}{1 &-1}\mat{cc}{q & -q\sqrt{q}\\ -q\sqrt{q} & q}\mat{c}{1\\-1}v^*\\
&=T_{R}^{-1}+v \mat{cc}{1 &-1}\mat{cc}{q & -(1-2q)\\ -(1-2q) & q}\mat{c}{1\\-1}v^*\\
&=T_{R}^{-1}+2(1-q)vv^* =T_{R}^{-1}+2\sqrt{q}\,vv^*.
\end{align*}
In the third step we used $q\sqrt{q}=q(1-q)=1-2q$ which follows from  the second and third identity in \eqref{qids}. Combining the formula for  $(T_GT_G^*)^{-1}$ with the one for $T_R^{-1}H_G$ yields
\begin{align*}
(T_GT_G^*)^{-1}H_G E_2
&=T_{R}^{-1}H_G E_2+2\sqrt{q}\,vv^*H_G E_2\\
&=q\mat{cc}{v&-v}+2\sqrt{q}\mat{cc}{v& -v}\\
&=q({1+2/\sqrt{q}})\mat{cc}{v& -v}=q(1-2q)^{-1}\mat{cc}{v& -v}.
\end{align*}
To see that the latter identity holds, note that $1-3q+q^2=0$ implies $3-q=1/q$, so that together with $q\sqrt{q}=1-2q$ we obtain
\[
1+2/\sqrt{q}=\frac{\sqrt{q}+2}{\sqrt{q}}=\frac{(1-q)+2}{\sqrt{q}}=\frac{3-q}{\sqrt{q}}=\frac{1}{q\sqrt{q}}=(1-2q)^{-1}.
\]
Next note that  $T_G^*v=vG(-q)^*.$ By \eqref{defYj} we then obtain that for $\nu=1,2,\ldots$ that
\begin{equation*}
Y_\nu=\frac{-q}{1-2q}(-q)^{\nu-1} N=\frac{(-q)^\nu}{1-2q}N,
\end{equation*}
where
\[
N=\mat{cc}{G(-q)^* & -G(-q)^*}=\mat{cc}{\sqrt{q}&-\sqrt{q}\\ q & -q}
=\mat{cc}{1-q&-(1-q)\\ q & -q}.
\]
Hence
\begin{align*}
&Y(z)
=I_2+\sum_{\nu=1}^\infty \frac{(-q)^\nu}{1-2q}N z^\nu=I_2+\frac{-qz}{(1-2q)(1+zq)}N\\
&\ \ =\frac{1}{(1-2q)(1+zq)}\mat{cc}{(1-2q)(1+zq)- q(1-q)z &  q(1-q)z  \\ -q^2z  & (1-2q)(1+zq)+zq^2}\\
&\ \ =\frac{1}{(1-2q)(1+zq)}\mat{cc}{(1-2q)-q^2z &  (1-2q)z  \\ -q^2z  & (1-2q)(1+z)}\\
&\ \ = {\frac{1}{1+zq}\mat{cc}{1-(1-q)z &  z  \\ -(1-q)z  & 1+z}}.
\end{align*}
{Here we used $(1-2q)(1+zq)=1-2q+qz-2q^2z$ and $q(1-q)=1-2q$ in the last but one identity, and $q^2/(1-2q)=q(1-q)^2/(1-2q)=1-q$ (which follows from the last two identities in \eqref{qids}) in the last identity.} Hence we obtain $Y$ is given by \eqref{Yex} as claimed.

Using similar calculations as in the previous remark one can prove   that for  $G(z)=\begin{bmatrix} 1+z & -z \end{bmatrix}$  the matrices $\Xi_0$ and $\tht_0$  are given by
\[
\Xi_0= \frac{q}{1-2q}\begin{bmatrix} 1-q \\ q \end{bmatrix} \ands \tht_0 = \frac{1}{\sqrt{q}}\begin{bmatrix}0\\ 1\end{bmatrix},
\]
where $q=\half(3-\sqrt{5})$. Note, however, that these formulas for  $\Xi_0$ and $\tht_0$ can be derived from  earlier results   in \cite{FKR2a} and \cite{FKR2b}. Indeed, $\Xi_0$ is obtained by taking $z=0$ in $X(z)$ on \cite[Page 414]{FKR2a}, and the formula for $\tht_0$ is obtained by taking $z=0$ in the formula for $\tht(z)$ appearing in the final paragraph of~\cite{FKR2b}.
\end{remark}

\end{document}